\documentclass[10pt]{article}

\usepackage{graphicx,subfigure}
\usepackage{amsmath,amsthm,amssymb,enumerate}
\usepackage{euscript,mathrsfs}
\usepackage{color}
\usepackage{dsfont}
\usepackage{url}
\usepackage{bm}
\usepackage[left=2cm,right=2cm,top=3.5cm,bottom=3.5cm]{geometry}
\usepackage[framemethod=tikz]{mdframed}
\usepackage{soul}
\usepackage[colorlinks=true,       
linkcolor=black,       
citecolor=blue,      
urlcolor=magenta,     
bookmarks=true,       
pdfstartview=FitH]{hyperref}
\usepackage{cleveref}

\catcode`\@=11 \@addtoreset{equation}{section}

\catcode`\@=12

\theoremstyle{definition}
\newtheorem{Theorem}{Theorem}[section]
\newtheorem{Proposition}[Theorem]{Proposition}
\newtheorem{Lemma}[Theorem]{Lemma}
\newtheorem{Corollary}[Theorem]{Corollary}
\newtheorem{Definition}[Theorem]{Definition}

\newtheorem{Remark}[Theorem]{Remark}

\newcommand{\bTheorem}[1]{\begin{Theorem} \label{T#1}}
	\newcommand{\eT}{\end{Theorem}}

\newcommand{\bProposition}[1]{\begin{Proposition} \label{P#1}}
	\newcommand{\eP}{\end{Proposition}}

\newcommand{\bLemma}[1]{\begin{Lemma} \label{L#1}}
	\newcommand{\eL}{\end{Lemma}}

\newcommand{\bCorollary}[1]{\begin{Corollary} \label{C#1}}
	\newcommand{\eC}{\end{Corollary}}

\newcommand{\bRemark}[1]{\begin{Remark} \label{R#1}}
	\newcommand{\eR}{\end{Remark}}

\newcommand{\bDefinition}[1]{\begin{Definition} \label{D#1}}
	\newcommand{\eD}{\end{Definition}}

\newcommand{\bFormula}[1]{\begin{equation} \label{#1}}
	\newcommand{\eF}{\end{equation}}

\definecolor{Cgrey}{rgb}{0.85,0.85,0.85}
\definecolor{Cblue}{rgb}{0.50,0.85,0.85}
\definecolor{Cred}{rgb}{1,0,0}
\definecolor{fancy}{rgb}{0.10,0.85,0.10}
\definecolor{amaranth}{rgb}{0.9, 0.17, 0.31}

\newcommand\Cbox[2]{%
	\newbox\contentbox%
	\newbox\bkgdbox%
	\setbox\contentbox\hbox to \hsize{%
		\vtop{
			\kern\columnsep
			\hbox to \hsize{%
				\kern\columnsep%
				\advance\hsize by -2\columnsep%
				\setlength{\textwidth}{\hsize}%
				\vbox{
					\parskip=\baselineskip
					\parindent=0bp
					#2
				}%
				\kern\columnsep%
			}%
			\kern\columnsep%
		}%
	}%
	\setbox\bkgdbox\vbox{
		\color{#1}
		\hrule width  \wd\contentbox %
		height \ht\contentbox %
		depth  \dp\contentbox
		\color{black}
	}%
	\wd\bkgdbox=0bp%
	\vbox{\hbox to \hsize{\box\bkgdbox\box\contentbox}}%
	\vskip\baselineskip%
}

\mdfdefinestyle{MyFrame}{%
	linecolor=black,
	outerlinewidth=1pt,
	roundcorner=5pt,
	innertopmargin=\baselineskip,
	innerbottommargin=\baselineskip,
	innerrightmargin=10pt,
	innerleftmargin=10pt,
	backgroundcolor=white!20!white}

\allowdisplaybreaks

\newcommand{\R}{\mathbb{R}}

\newcommand{\veps}{\varepsilon}
\newcommand{\eps}{\veps}

\newcommand{\lam}{\lambda}

\renewcommand{\div}{{\rm div}\,}

\newcommand{\vu}{\bm{u}}

\newcommand{\vm}{\bm{m}}

\newcommand{\vn}{\bm{n}}

\newcommand{\vU}{\bm{U}}

\newcommand{\vF}{\bm{F}}

\newcommand{\vx}{\bm{x}}
\newcommand{\vV}{\bm{V}}
\newcommand{\vS}{\bm{S}}

\newcommand{\tvU}{\widetilde{\vU}}

\newcommand{\tM}{\widetilde{M}}

\newcommand{\tp}{\widetilde{p}}
\newcommand{\tu}{\widetilde{u}}

\newcommand{\cV}{\mathcal{V}}

\newcommand{\cD}{\mathcal{D}}

\newcommand{\aleq}{\stackrel{<}{\sim}}

\newcommand{\oline}{\overline}

\def\softd{{\leavevmode\setbox1=\hbox{d}%
		\hbox to 1.05\wd1{d\kern-0.4ex{\char039}\hss}}}

\begin{document}


\title{\bf On admissible solutions to the coupled Riemann problem with heat-flux discontinuity}

\author{
	Changsheng Yu
	\thanks{The work was  supported by the Deutsche Forschungsgemeinschaft (DFG, German Research Foundation) - Project number 525853336 -- SPP 2410 ``Hyperbolic Balance Laws: Complexity, Scales and Randomness.''}
	\and
    Tiegang Liu$^\dagger$
    }

\date{}

\maketitle

\centerline{$^*$Institute of Mathematics, Johannes Gutenberg-University Mainz}
\centerline{Staudingerweg 9, 55 128 Mainz, Germany}
\centerline{chayu@uni-mainz.de}

\medskip

\centerline{$^\dagger$School of Mathematical Sciences, Beihang University}
\centerline{Beijing, 100191, PR China}
\centerline{liutg@buaa.edu.cn}


\begin{abstract}
	We study the Riemann problem for the compressible Euler equations with a stationary coupling interface across which a discontinuity in the heat flux is prescribed. This coupling gives rise to non-conservative effects and models heat addition mechanisms such as condensation-induced waves. Without imposing restrictions on sonic states, we analyze the problem in all Mach number regimes.
	
	Lax weak entropy solutions are constructed via half-Riemann problems, and we show that non-uniqueness occurs for a large class of initial data. To address this, we introduce an admissibility criterion derived from the evolutionarity criterion, and we characterize the full structure of admissible Riemann solutions. Our analysis establishes local existence of admissible Riemann solutions provided the heat flux jump is sufficiently small, while also identifying families of initial data for which admissible Riemann solutions cannot exist for any fixed, nonzero heat flux jump. Numerical experiments are included to illustrate the theoretical findings.
\end{abstract}

{\small
\noindent
{\bf 2020 Mathematics Subject Classification:}{ 35L65, 35L67, 65M30, 76N10}

\medbreak
\noindent {\bf Keywords:} Coupled Riemann problem, compressible Euler system, condensation, admissible solution, monotonicity criterion.

}


\section{Introduction}

Coupled systems of conservation laws arise naturally in many areas of fluid dynamics and
engineering whenever different flow regions, models, or physical effects are connected
through an interface. Typical examples include gas dynamics in networks of pipes and
junctions \cite{holden2013network,d2010modeling,MR3200227}, multiphase flows with phase transition
across moving interfaces \cite{abeyaratne1990driving}, shallow water flows coupled with
topography or man-made structures such as dams and sluices \cite{han2014exact,ungarish2011two},
traffic flow on road networks \cite{garavello2006traffic,MR2209753}, and blood flow in arterial
networks \cite{quarteroni2001mathematical}. In these applications, each branch or subdomain
is modeled by a system of hyperbolic conservation laws, while coupling conditions at the
interfaces ensure mass, momentum, or energy balance. 

\begin{figure}[!h]
	\centering
	\includegraphics[width=0.4\linewidth]{ 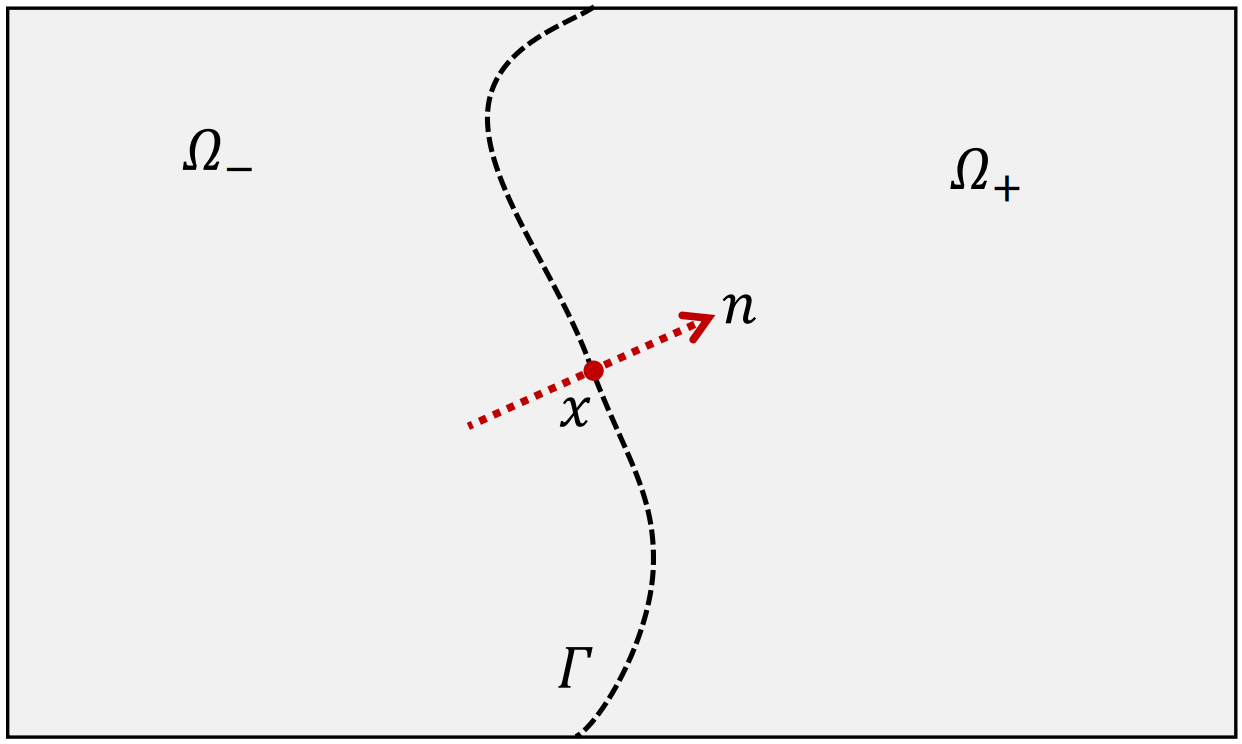}
	\caption{The coupled problem of the compressible Euler system on the $\Omega$.}
	\label{fig7}
\end{figure}
This paper investigates the coupled problem for the compressible fluid flow. 
The domain $\Omega \subset \R^d$ (see Figure~\ref{fig7}) is decomposed 
into two subdomains $\Omega_-$ and $\Omega_+$ separated by a stationary, smooth 
coupling interface $\Gamma$ with a unit normal vector $\vn$. 
In each subdomain, the fluid motion is governed by the compressible Euler system
\begin{equation*}
	\frac{\partial \vU}{\partial t} + \nabla \cdot F(\vU) = 0,
\end{equation*}
where the vector of conservative variables and the flux are given by
\begin{equation*}
	\vU =
	\begin{pmatrix}
		\rho \\ \vm \\ E
	\end{pmatrix}, 
	\qquad 
	F(\vU) =
	\begin{pmatrix}
		\rho \vu \\[0.3em] \rho \vu \otimes \vu + p I_d \\[0.3em] (E+p)\vu
	\end{pmatrix}.
\end{equation*}
Here, $\rho$ denotes the density, $\vu \in \R^d$ the velocity, 
$\vm = \rho \vu$ the momentum, and $E$ the total energy. 
The pressure $p$ is related to the internal energy $e$ via a polytropic equation of state (EOS),
\begin{equation*}
	p = (\gamma - 1)\rho e, \qquad \gamma > 1,
\end{equation*}
with
\begin{equation*}
	e = \frac{1}{\rho} \left( E - \tfrac{1}{2}\rho |\vu|^2 \right).
\end{equation*}

On the coupling interface $\Gamma$, we impose a precondition. 
The traces of the solution in the two subdomains are defined as
\begin{equation*}
	\begin{aligned}
		&\vx^-=\lim_{\epsilon\to 0^-} \vx+\epsilon \vn ,\quad
		\vx^+=\lim_{\epsilon\to 0^+} \vx+\epsilon \vn ,\\
		&\vU_-=\lim_{\epsilon\to 0^-} \vU(t,\vx+\epsilon \vn ),\quad
		\vU_+=\lim_{\epsilon\to 0^+} \vU(t,\vx+\epsilon \vn ).
	\end{aligned}
\end{equation*}
Here, $\vn$ denotes the unit normal vector at $\vx \in \Gamma$.  
In general, the coupling condition can be characterized by a nonlinear relation
\[
\Psi(\vU_-,\vU_+)=0 .
\]

In this work, we focus on the so-called \emph{heat-flux jump} condition, which reads
\begin{equation*}
	\Psi(\vU_-,\vU_+)=
	\begin{pmatrix}
		\rho_-\vu_-\cdot \vn-\rho_+\vu_+\cdot \vn\\[0.3em]
		\rho_-|\vu_-|^2+p_--\rho_+|\vu_+|^2-p_+ \\[0.3em]
		(1+k)(E_-+p_-)\vu_-\cdot\vn-(E_++p_+)\vu_+\cdot \vn
	\end{pmatrix},
\end{equation*}
where $k>0$ is a fixed parameter.

Note that if $\vu\cdot \vn=0$, the condition degenerates into the simple continuity relation $\vU_-=\vU_+$.  
To exclude this trivial case, we assume throughout that $\vu_-\cdot \vn>0$, which implies $\vu_+\cdot \vn>0$ from $\Psi(\vU_-,\vU_+)=0$.
Unlike many previous studies, we do not impose any restrictions on sonic states. 
In particular, our analysis is valid across all Mach number regimes, from subsonic to supersonic, thereby providing a unified framework.

This model has been proposed in the literature to describe flows with condensation 
\cite{cheng2010condensation,schnerr2005unsteadiness}.  
Such flows may exhibit complex wave patterns, as observed both experimentally and numerically, see e.g.~\cite{MR4413539,horie2007shockwave}.  
These structures can be validated through the following Rayleigh-type coupling relations:
\begin{equation*}
	\begin{aligned}
		\rho_-\vu_- &= \rho_+\vu_+,\\
		\rho_-|\vu_-|^2+p_- &= \rho_+|\vu_+|^2+p_+,\\
		(1+k)\left(h_-+\tfrac{|\vu_-|^2}{2}\right) &= h_+ + \tfrac{|\vu_+|^2}{2},
	\end{aligned}
\end{equation*}
where $h=(e+p)/\rho$ denotes the specific enthalpy.  
This heat-flux jump condition is precisely equivalent to $\Psi(\vU_-,\vU_+)=0$.

We consider the Cauchy problem with initial data
\begin{equation*}
	\vU(0,x)=\vU_0(x).
\end{equation*}
In the spirit of half-Riemann problems, such a setting can be viewed as two separate problems posed on the subdomains $\Omega_-$ and $\Omega_+$:
\begin{equation}\label{E1}
	\begin{aligned}
		&\frac{\partial \vU}{\partial t}+\div_x F(\vU)=0,\qquad \vU:[0,T]\times \Omega_- \to \cD \subset \R^{d+2},\\
		&\vU(0,x)=\vU_{0}(x),\quad x\in \Omega_-,
	\end{aligned}
\end{equation}
and
\begin{equation}\label{E2}
	\begin{aligned}
		&\frac{\partial \vU}{\partial t}+\div_x F(\vU)=0,\qquad \vU:[0,T]\times \Omega_+ \to \cD \subset \R^{d+2},\\
		&\vU(0,x)=\vU_{0}(x),\quad x\in \Omega_+.
	\end{aligned}
\end{equation}

Following \cite{herty2019coupling}, we now introduce the notion of weak solutions for the coupled problem.
\begin{Definition}\label{def1}
	A function $\vU:[0,T]\times \Omega \to \cD \subset \R^{d+2}$ is called a \emph{weak solution} of the coupled problem if 
	\begin{itemize}
		\item $\vU$ is a weak solution of the systems \eqref{E1} and \eqref{E2}, respectively; 
		\item the traces $\vU_-$ and $\vU_+$ exist for $x\in \Gamma$ and satisfy the coupling condition
		\begin{equation*}
			\Psi(\vU_-,\vU_+)=0 \qquad \text{for a.e. } t\in [0,T].
		\end{equation*}
	\end{itemize}
\end{Definition}

We focus on the Riemann problem associated with the one-dimensional coupled system, which is defined by the initial condition
\begin{equation*}
	\vU_0(x)=\begin{cases}
		\vU_L, & x\leq 0,\\
		\vU_R, & x>0,
	\end{cases}
\end{equation*}
where $\vU_L,\vU_R \in \cD$ are two prescribed states.
This setting can be interpreted as the projection of the multi-dimensional coupled problem onto the normal direction of the interface $\Gamma$.
Consequently, the coupling interface reduces to a single point in $\R$, which, without loss of generality, is located at the origin.

The coupling interface acts as a stationary discontinuity in the Riemann solutions. 
This leads to complex wave patterns, especially in the absence of any restriction on the Mach numbers. 
In addition, the interface states may admit multiple branches, giving rise to the non-uniqueness of Riemann solutions for a broad class of initial data. 
To address this issue, we introduce an admissibility criterion inspired by the evolutionarity condition for general discontinuities in fluid dynamics, and we characterize the resulting structure of admissible solutions.

Our main contributions can be summarized as follows. 
First, we establish the local existence of admissible Riemann solutions whenever the heat-flux jump across the interface is sufficiently small. 
Second, we show that for certain families of initial data, no admissible solution exists if the heat-flux discontinuity is fixed and nonzero. 
Finally, we present numerical verifications that not only confirm the predicted solution structures but also reveal the emergence of self-similar yet non-admissible solutions in the absence of monotonicity criterion. 
These results provide a rigorous foundation for the study of coupled hyperbolic systems with non-conservative interfaces and may guide future developments in the analysis of networked and multiphysics flow models.

This paper is structured as follows.
Section~\ref{sec2} establishes the structural properties of coupled Riemann solutions.
Section~\ref{sec3} discusses the non-uniqueness of entropy solutions and motivates the admissibility criterion introduced in Section~\ref{sec4}.
Section~\ref{sec5} proves local existence of admissible solutions for sufficiently small heat-flux jumps, whereas Section~\ref{sec6} shows the existence of initial data without admissible solutions.
Numerical experiments verifying the theoretical findings are presented in Section~\ref{sec7}, and concluding remarks are given in Section~\ref{sec8}.

\section{Coupled Riemann Problem}\label{sec2}

Before addressing the coupled Riemann problem, we recall some standard results for the classical Riemann problem of the Euler system, corresponding to the degenerate coupling interface $k=0$:
\begin{equation*}\left\lbrace 
	\begin{aligned}
		&\frac{\partial \vU}{\partial t}+\frac{\partial \vF(\vU)}{\partial x}=0,\quad  \vU:[0,T]\times \R \to \cD \in \R^{3}\\
		&\vU(0,x)=\vU_0(x)
	\end{aligned}\right. 
\end{equation*}
		
The compressible Euler system is strictly hyperbolic with three characteristic fields
\begin{equation*}
	\lambda_1(\vU)=u-c,\quad \lambda_2(\vU)=u, \quad \lambda_3(\vU)=u+c,
\end{equation*}
and $c=\sqrt{\frac{\gamma p}{\rho}}$ is the speed of sound. 
Solutions of the classical Riemann problem are sought in the self-similar form
\begin{equation*}
	\vV\left( \frac{x}{t}; \vU_L,\vU_R \right) : \R\cup\{\pm \infty\}\times \cD \times \cD \to \cD
\end{equation*}
constructed as a concatenation of three simple waves associated with the characteristic fields. Each simple wave is either a shock wave, a rarefaction wave, or a contact discontinuity. We restrict attention to entropy solutions, in the sense that every discontinuity in the solution satisfies the Lax entropy conditoin. For further details on the construction of entropy solutions to the classical Riemann problem we refer to \cite{toro2013riemann}.
\begin{Definition}[Lax entropy condition]\label{Lax}
	The discontinuity with the left-hand state $\vU_1$ and right-hand state $\vU_2$ is said to satisfy the Lax entropy conditions if there exists an index $i\in\{1,2,3\}$ such that we have either 
	\begin{itemize}
		\item [(i)] 
		\begin{equation*}
			\begin{cases}
				\lam_i(\vU_2)<\sigma<\lam_{i+1}(\vU_2),\\
				\lam_{i-1}(\vU_1)<\sigma<\lam_i(\vU_1),
			\end{cases}
		\end{equation*}
		if the $i$th characteristic field is genuinely nonlinear; or 
		\item [(ii)]
		\begin{equation*}
			\lam_i(\vU_1)=\sigma=\lam_i(\vU_2)
		\end{equation*}
		if the $i$th characteristic field is linearly degenerate.
	\end{itemize}
\end{Definition}

The first and third characteristic fields of the Euler system are genuinely nonlinear, and their associated simple waves are either shock waves or rarefaction waves.
The second characteristic field is linearly degenerate, giving rise to contact discontinuities.
For each characteristic fields, we denote the corresponding Lax curves of simple waves by
\begin{equation*}
	L_i^{\pm}(\eps;\vU_0),\quad i=1,2,3,
\end{equation*}
where the index $i$ specifies the characteristic field, the superscript $\pm$ indicates the forward or backward direction, $\eps$ is a scalar parameter measuring the wave strength, and $\vU_0$ is the reference state from which the curve is issued.
Explicit expressions of these curves are collected in Appendix~\ref{App2}.

We recall the following property of classical Riemann solutions, as established in \cite{herty2019coupling}.
\begin{Proposition}[\bf Solutions of the classical RP]\label{P3}
	Let us assume that the given Riemann data $\vU_L$ and $\vU_R$ satisfy
	\begin{equation}\label{E6}
		u_L+\frac{2c_L}{\gamma-1}>u_R-\frac{2c_R}{\gamma-1}.
	\end{equation}
	Then the classical RP has a unique solution that is not vacuum.
	Moreover, the intermediate states in the solution depend continuously on the initial data.
\end{Proposition}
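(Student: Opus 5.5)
The plan is the standard pressure-function argument for the Euler Riemann problem. The solution consists of a 1-wave from $\vU_L$, a contact discontinuity, and a 3-wave into $\vU_R$. Across the contact, pressure and velocity are continuous, so the whole problem reduces to one scalar equation for the middle pressure $p^*>0$.

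\textbf{Step 1: the wave functions.} For $p>0$ define $f_K(p;\vU_K)$, $K\in\{L,R\}$, as follows, using the Lax curves $L_1^{\pm}$ and $L_3^{\pm}$ from Appendix~\ref{App2}:
\begin{itemize}
\item for $p>p_K$ (shock branch), $f_K(p)=(p-p_K)\sqrt{\dfrac{A_K}{p+B_K}}$, with $A_K=\dfrac{2}{(\gamma+1)\rho_K}$ and $B_K=\dfrac{\gamma-1}{\gamma+1}p_K$;
\item for $p\le p_K$ (rarefaction branch), $f_K(p)=\dfrac{2c_K}{\gamma-1}\left(\left(\dfrac{p}{p_K}\right)^{\frac{\gamma-1}{2\gamma}}-1\right)$.
\end{itemize}
The intermediate velocity is then $u^*=u_L-f_L(p^*)=u_R+f_R(p^*)$. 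So we must solve
\begin{equation*}
G(p):=f_L(p;\vU_L)+f_R(p;\vU_R)+u_R-u_L=0 .
\end{equation*}
We also need the Lax entropy condition of Definition~\ref{Lax}. Along the admissible branches this amounts to selecting the shock branch exactly when $p>p_K$, which is the standard fact that the Hugoniot branch with increasing pressure is the compressive one.

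\textbf{Step 2: monotonicity and limits of $G$.} The function $f_K$ is $C^1$ across $p=p_K$; both branches have derivative $1/(\rho_Kc_K)$ there. A direct differentiation shows that $f_K$ is strictly increasing on each branch, so $G$ is continuous and strictly increasing on $(0,\infty)$. As $p\to\infty$, $f_K(p)\sim\sqrt{A_Kp}\to\infty$, hence $G\to+\infty$. As $p\to 0^+$, $f_K\to-\dfrac{2c_K}{\gamma-1}$, so
\begin{equation*}
G(0^+)=u_R-u_L-\frac{2c_L}{\gamma-1}-\frac{2c_R}{\gamma-1}.
\end{equation*}
By \eqref{E6}, $G(0^+)<0$. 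The intermediate value theorem then gives a unique root $p^*>0$, and therefore a non-vacuum solution.

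\textbf{Step 3: reconstructing the states.} From $p^*$ we get $u^*$ and the densities $\rho_L^*$ and $\rho_R^*$, via the Hugoniot or isentropic relations. Both densities are positive because $p^*>0$. For uniqueness, observe that any entropy solution with a non-vacuum middle state must have its middle pressure solving $G=0$, so it coincides with the one constructed. This requires checking that the Lax condition excludes the non-physical branches.

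\textbf{Step 4: continuous dependence.} $G$ depends continuously ($C^1$ in fact) on $(\vU_L,\vU_R)$ and $p$, and $\partial_pG>0$. The implicit function theorem therefore gives continuous dependence of $p^*$, and hence of $u^*$ and $\rho^*_{L,R}$, on the data. The delicate point is the $C^1$ matching of $f_K$ at $p=p_K$, which guarantees that $\partial_p G$ exists and is positive even at the branch switch. Beyond that, the only subtle points are the strict monotonicity computation on the shock branch and excluding solutions that would need vacuum. Both are routine and can be taken from \cite{toro2013riemann} or \cite{herty2019coupling}.
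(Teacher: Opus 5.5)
Your argument is correct. It is the standard pressure-function proof: $G$ is strictly increasing, $G(0^+)<0$ holds exactly under \eqref{E6}, $G\to+\infty$, and $f_K$ matches in $C^1$ at $p=p_K$, which gives continuous dependence. The paper does not prove this proposition itself but recalls it from \cite{herty2019coupling} (cf.\ \cite{toro2013riemann}), where it rests on precisely this argument; when writing it out, also note that the positive middle densities give the correct ordering of the 1-wave, contact and 3-wave speeds, so the reconstructed fan is a genuine self-similar solution.
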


We now turn to the coupled Riemann problem with a non-degenerate coupling interface.
Following the approach of Herty et al.~\cite{herty2019couplingof}, the coupled Riemann solution is constructed from two half-Riemann problems.
The half-Riemann problem posed in the left subdomain $\Omega_-$ is given by
\begin{equation}\label{E3}
	\begin{aligned}
		&\frac{\partial \vU}{\partial t}+\frac{\partial \vF (\vU)}{\partial x}=0, \ \vU:[0,T]\times \R_- \to \cD \in \R^3, \\
		&\vU(0,x)=\begin{cases}
			\vU_L,\ &x<0\\
			\vU_-,\ &x>0
		\end{cases}
	\end{aligned}
\end{equation}
The half-Riemann problem in the right subdomain $\Omega_+$ is given by
\begin{equation}\label{E4}
	\begin{aligned}
		&\frac{\partial \vU}{\partial t}+\frac{\partial \vF (\vU)}{\partial x}=0, \ \vU:[0,T]\times \R_+ \to \cD \in \R^3, \\
		&\vU(0,x)=\begin{cases}
			\vU_+,\ &x>0\\
			\vU_R,\ &x<0
		\end{cases}
	\end{aligned}
\end{equation}
If both $\vU_-$ and $\vU_+$ are subsonic, then according to \cite{du2025coupled}, they lie within the following sets of boundary states:
\begin{equation*}
	\begin{aligned}
		\overline{\cV}_L(\vU_L)= \left\lbrace
			\vU\in \cD:\ 
			\exists M\in \{ 1,2,3 \},\ \exists (\epsilon_i)_{i=1}^{M}\subset \R \text{ s.t. } \vU=\vU_M \text{ with } \right. \\
			\vU_0=\vU_L,\vU_i:=L_i^+(\epsilon_i;\vU_{i-1}), i=1,...,M \text{ and } \\
			\left.  \lambda_M(\vU)\leq 0 <\lambda_{M+1}(\vU)
		 \right\rbrace 
	\end{aligned}
\end{equation*}
\begin{equation*}
	\begin{aligned}
		\overline{\cV}_R(\vU_R)= \left\lbrace
		\vU\in \cD:\ 
		\exists M\in \{ 1,2,3 \},\ \exists (\epsilon_i)_{i=1}^{M}\subset \R \text{ s.t. } \vU=\vU_M \text{ with } \right. \\
		\vU_0=\vU_R,\vU_i:=L_i^-(\epsilon_i;\vU_{i-1}), i=1,...,M \text{ and } \\
		\left.  \lambda_{M-1}(\vU)\leq 0 <\lambda_{M}(\vU)
		\right\rbrace 
	\end{aligned}
\end{equation*}
Given states $\vU_-\in \overline{\cV}_L(\vU_L)$ and $\vU_+\in \overline{\cV}_R(\vU_R)$, the coupled Riemann solution is constructed by concatenating the solutions of the two half-Riemann problems, subject to the coupling condition $\Psi(\vU_-,\vU_+)=0$.

In our opinion, however, the above sets of boundary states are only suitable in the subsonic regime.
In the supersonic case, even if the boundary states satisfy $\vU_-\in \overline{\cV}_L(\vU_L)$ and $\vU_+\in \overline{\cV}_R(\vU_R)$ together with the coupling condition, they may fail to generate a valid Riemann solution.
As a counterexample, we consider the initial data satisfying
\begin{itemize}
	\item [(i)] $M_L>1$; 
	\item [(ii)] $\vU_-=L_1^+(\eps;\vU_L)\in \overline{\cV}_L(\vU_L)$, $0<\eps<\frac{2\gamma}{\gamma+1}(M_L^2-1)$; 
	\item [(iii)] $\vU_+\in \overline{\cV}_R(\vU_R)$, $\Psi(\vU_-,\vU_+)=0$; 
\end{itemize}
Then the shock wave connecting the left-hand state $\vU_L$ and right-hand state $\vU_-$ has a positive speed.
It is clear that the left half–Riemann problem cannot serve as a coupled Riemann solution on the left half–plane.
This observation indicates that, in the general case where all Mach numbers are involved, the wave speeds in the half–Riemann problems must be explicitly taken into account.

We employ the double CRP framework described in \cite{yu2022riemann2} to construct solutions for the coupled Riemann problem.
Let us consider a self-similar solution \(\vU\) of the coupled Riemann problem with traces \((\vU_-, \vU_+)\). To extend \(\vU\) from the left half-plane to the whole plane, we define the new function: \[ \vV(x,t) = \begin{cases} \vU(x,t), & x<0 \\ \vU_-, & x\geq 0 \\ \end{cases} \] This construction replaces the solution in the right half-plane with the constant state \(\vU_-\), so the extension is constant for \(x \geq 0\).
It is clearly a classical RP solution, therefore contains at most three simple waves corresponding the characteristic fields with negative speeds. This argument also holds similarily for the solution on the right half-plane. This analysis lead to the following modification of the sets of boundary states
\begin{equation*}
	\begin{aligned}
		{\cV}_L(\vU_L):=\big\{ 
		& \vU \in  \overline{\cV}_L(\vU_L) :
		\text{ there exists a classical RP solution } \vV \\ 
		& \text{ such that }  \vV\left(\frac{x}{t}-;\vU_L,\vU \right)\equiv \vU \text{ for all } x\geq 0, t\geq 0
		\big\}  .
	\end{aligned}
\end{equation*}
\begin{equation*}
	\begin{aligned}
		{\cV}_R(\vU_R):=\big\{
		& \vU \in \overline{\cV}_R(\vU_R) :
		\text{ there exists a classical RP solution } \vV \big\} \\ 
		& \text{ such that }  \vV\left(\frac{x}{t}+;\vU,\vU_R \right)\equiv \vU \text{ for all } x\leq 0, t\geq 0
		\big\}  .
	\end{aligned}
\end{equation*}

Then we can demonstrate that all weak solutions of the coupled Riemann problem have boundary states that belong to the modified sets defined above.

\begin{Theorem}[\bf Structure of Coupled Riemann Solutions]\label{T1}
	Given the initial Riemann data \(\vU_L\) and \(\vU_R\), any self-similar, non-vacuum solution \(\vU\) of the coupled Riemann problem must satisfy \(\vU_-\in \mathcal{V}_L(\vU_L)\) and \(\vU_+\in \mathcal{V}_R(\vU_R)\). 
	
	Moreover, for any \(\vU_-\in \mathcal{V}_L(\vU_L)\) and \(\vU_+\in \mathcal{V}_R(\vU_R)\) satisfying the coupling condition \(\Psi(\vU_-,\vU_+)=0\), there exists a unique self-similar, non-vacuum entropy weak solution of the coupled Riemann problem given by
	\begin{equation}\label{E5}
		\vU(x,t)=
		\begin{cases}
			\vV\left( \frac{x}{t}, \vU_L, \vU_- \right), & x < 0, \\
			\vV\left( \frac{x}{t}, \vU_+, \vU_R \right), & x > 0.
		\end{cases}
	\end{equation}
\end{Theorem}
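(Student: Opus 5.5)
The plan is to prove the two parts separately, using the extension trick introduced just before the statement as the main tool.

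\emph{Necessity.} Let $\vU$ be a self-similar, non-vacuum solution of the coupled Riemann problem with traces $(\vU_-,\vU_+)$. Define $\vV$ as in the text: $\vV=\vU$ for $x<0$ and $\vV\equiv\vU_-$ for $x\ge 0$. The first step is to check that $\vV$ is a weak solution of the classical Euler system on all of $\R\times(0,T)$. On $\{x<0\}$ this holds because $\vU$ solves \eqref{E1}; on $\{x>0\}$ the function is constant. Across $x=0$ the left trace of $\vV$ is $\vU_-$, which equals the right value, so no jump occurs there and the Rankine--Hugoniot relation is trivially satisfied. Hence $\vV$ is a self-similar weak solution of the classical Riemann problem with data $(\vU_L,\vU_-)$. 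Its non-vacuum character, together with Proposition~\ref{P3} (applicable because a non-vacuum self-similar solution connecting $\vU_L$ to $\vU_-$ forces \eqref{E6} for this pair), shows that it is \emph{the} classical solution. It is therefore a concatenation of $1$-, $2$- and $3$-waves connecting $\vU_L$ to $\vU_-$ through Lax curves $L_i^+$. Since $\vV$ is constant for $x\ge0$, every wave present has speed $\le 0$. If $M$ denotes the index of the last nontrivial wave, this gives $\lambda_M(\vU_-)\le 0$ (the trailing edge of a rarefaction, the shock speed, or the contact speed, each compared through the Lax inequalities). The next family satisfies $\lambda_{M+1}(\vU_-)>0$; otherwise a wave of family $M+1$ could be appended, or the ordering of the waves would contradict the constancy for $x\ge0$. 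Thus $\vU_-\in\overline{\cV}_L(\vU_L)$, and by construction $\vV(\cdot;\vU_L,\vU_-)\equiv\vU_-$ on $x\ge0$, so $\vU_-\in\cV_L(\vU_L)$. The right side is symmetric, using $L_i^-$ and extending by $\vU_+$ for $x\le0$.

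\emph{Sufficiency.} Given $\vU_\pm$ in the respective sets with $\Psi(\vU_-,\vU_+)=0$, define $\vU$ by \eqref{E5}. By the definition of $\cV_L$, the classical solution $\vV(\cdot;\vU_L,\vU_-)$ equals $\vU_-$ for $x/t\ge 0$, so its restriction to $x<0$ is a weak entropy solution of \eqref{E1} whose trace at $x=0^-$ is $\vU_-$. The analogous statement holds on the right. The coupling condition is then exactly $\Psi(\vU_-,\vU_+)=0$, so Definition~\ref{def1} is satisfied, and every discontinuity inside $x\neq0$ is a Lax entropy wave. The non-vacuum property is inherited from the classical solutions.

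\emph{Uniqueness.} Suppose $\tvU$ is another such solution with the same traces. By the extension argument, its left part extends to a classical Riemann solution with data $(\vU_L,\vU_-)$, and this solution is unique by Proposition~\ref{P3}. Hence it coincides with $\vV(\cdot;\vU_L,\vU_-)$ on $x<0$, and likewise on $x>0$.

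The main obstacle I expect is in the necessity step: justifying the inequality $\lambda_M(\vU_-)\le 0<\lambda_{M+1}(\vU_-)$ rigorously in degenerate cases. These include a wave of speed exactly $0$ sitting at the interface, sonic rarefactions whose tail touches $x=0$, and the placement of the strict versus non-strict inequality. I would handle these by case analysis on the type of the last wave, using the Lax conditions from Definition~\ref{Lax} and the one-sided trace convention $\vV(x/t-)$.
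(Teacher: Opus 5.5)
Your route is the paper's own. The paper's only justification is the remark preceding the theorem: extend $\vU|_{x<0}$ by the constant trace $\vU_-$, observe that this is a classical Riemann solution whose waves all have negative speed, and argue symmetrically on the right. Your sufficiency and uniqueness parts are the natural completion of that remark.

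The one step you spell out beyond the paper, however, is wrong as argued. If $M$ is the index of the last nontrivial wave, $\lambda_{M+1}(\vU_-)>0$ does not follow from any appending or ordering argument. A single $1$-wave of negative speed from $\vU_L$ to a state with $u_-\le 0<u_-+c_-$ is a valid classical solution that is constant for $x\ge 0$, yet $\lambda_2(\vU_-)\le 0$. Instead take $M:=\#\{i:\lambda_i(\vU_-)\le 0\}$ and pad with zero-strength waves $\eps_i=0$. The last nontrivial family $m$ satisfies $\lambda_m(\vU_-)\le 0$:
\begin{itemize}
\item for a shock, by the Lax inequality $\lambda_m(\vU_-)<\sigma<0$;
\item for a rarefaction, by the tail condition;
\item for a contact, since $\lambda_2(\vU_-)=\sigma<0$.
\end{itemize}
Hence $m\le M$ and $\vU_-=\vU_M$ with $\lambda_M(\vU_-)\le 0<\lambda_{M+1}(\vU_-)$. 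Under the standing assumption $u_->0$ this simply gives $M\in\{0,1\}$. The degenerate cases you anticipate are harmless: by self-similarity every discontinuity in $\{x<0\}$ has strictly negative speed, and the extension is continuous across $x=0$.

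What this choice does expose is the case your argument silently skips: no nontrivial wave and $\lambda_1(\vU_L)>0$, i.e.\ $\vU_-=\vU_L$ supersonic. This is exactly \textit{Structure 3} of Theorem~\ref{E9} and the situation of Theorem~\ref{E13}. Here $M=0$, but the definition of $\overline{\cV}_L(\vU_L)$ only admits $M\in\{1,2,3\}$, and $\lambda_M(\vU_L)\le 0$ fails for each of these. So $\vU_L\notin\overline{\cV}_L(\vU_L)$, and the necessity claim is false as literally stated. Yet the paper later uses $\vU_L\in\cV_L(\vU_L)$ in precisely this case (e.g.\ Case (iv) of the proof of Theorem~\ref{E7}).

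Neither the paper's sketch nor your proposal addresses this. You should adopt the convention $M\in\{0,1,2,3\}$ with $\lambda_0:=-\infty$, $\lambda_4:=+\infty$, and likewise for $\overline{\cV}_R$. You also need the paper's standing restriction to Lax entropy solutions, in order to invoke Proposition~\ref{P3} and to place the waves on the curves $L_i^+$; non-entropic self-similar weak solutions are neither unique nor confined to these curves. With that convention and that restriction, your argument is complete.
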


\begin{Remark}
	
	As shown in \cite{yu2024mathematical}, and Remark~\ref{R2} in the following, Definition~\ref{def1} and the theorem built upon it do not cover the case where shock waves interact with the coupling interface. Such situations are commonly referred to as \emph{resonance} in the study of hyperbolic conservation laws with singular or geometric source terms and in nonconservative hyperbolic systems (cf.\cite{MR2097035, gallouet2008resonance}). For the sake of precision, Definition\ref{def1} should therefore be understood as specifying the class of \emph{non-resonant weak solutions}.
	
\end{Remark}

By default, throughout this paper we restrict our attention to non-resonant solutions of the coupled Riemann problem, namely those admitting the canonical form described in Theorem~\ref{T1}. Equivalently, the problem reduces to finding a pair of boundary states $(\vU_-,\vU_+)$ belonging to the admissible sets $\cV_L(\vU_L)$ and $\cV_R(\vU_R)$.

\section{Ill-posedness of Entropy Solutions}\label{sec3}

We now show that the coupled Riemann problem may admit non-unique solutions for a broad class of initial data, even when the Lax entropy condition is imposed. To this end, we present explicitly the ranges of the boundary states $\vU_-$ and $\vU_+$. 

More precisely, given a left-hand boundary state $\vU_-$, we determine all corresponding right-hand boundary states $\vU_+$ that satisfy the coupling condition. The latter, written component-wise, takes the form
\begin{equation*}
	\begin{aligned}
		\rho_-u_-&=\rho_+u_+,\\
		\rho_-u_-^2+p_-&=\rho_+u_+^2+p_+,\\
		(E_-+p_-)u_-(1+k)&=(E_++p_+)u_+.
	\end{aligned}
\end{equation*}
By means of standard manipulations of the Rankine–Hugoniot relations and thermodynamic identities (see, e.g., Anderson~\cite{anderson1990modern}), the state variables $\rho_+$, $u_+$, and $p_+$ can be expressed solely in terms of the right-hand Mach number $M_+$:
\begin{equation}\label{E14}
	\begin{aligned}
		&\frac{\rho_+}{\rho_-}=\left(\frac{M_-}{M_+} \right) ^2\frac{\gamma (M_+)^2+1}{\gamma (M_-)^2+1},\\
		&\frac{u_+}{u_-}=\left(\frac{M_+}{M_-} \right) ^2\frac{\gamma (M_-)^2+1}{\gamma (M_+)^2+1},\\
		&\frac{p_+}{p_-}=\frac{\gamma (M_-)^2+1}{\gamma (M_+)^2+1}.
	\end{aligned}
\end{equation}
The values of the right-hand Mach number $M_+$ have been derived by Cheng et al.~\cite{cheng2010condensation}.
They depend solely on the left-hand Mach number $M_+$, and admit two different branches:
\begin{equation*}
	M_+=\psi_1(M_-,k) \text{ or } M_+=\psi_2(M_-,k),
\end{equation*}
with
\begin{equation*}
	\begin{aligned}
		&\psi_1(M,k):=\sqrt{(1-I)/(1+\gamma I)},\\
		&\psi_2(M,k):=\sqrt{(1+I)(1-\gamma I)},\\
		&I=I(M,k):=\left[ (\gamma M^2+1)^2-(\gamma+1)M^2[(\gamma-1)M^2+2](1+k)\right] ^{\frac{1}{2}}\left( \gamma M^2+1\right) ^{-1}.
	\end{aligned}
\end{equation*}
We illustrate in Figure~\ref{fig1} the behavior of the functions $\psi_1$ and $\psi_2$ for the case $k=0.1$.
\begin{figure}[htbp]
	\centering
	\includegraphics[width=0.4\linewidth]{ 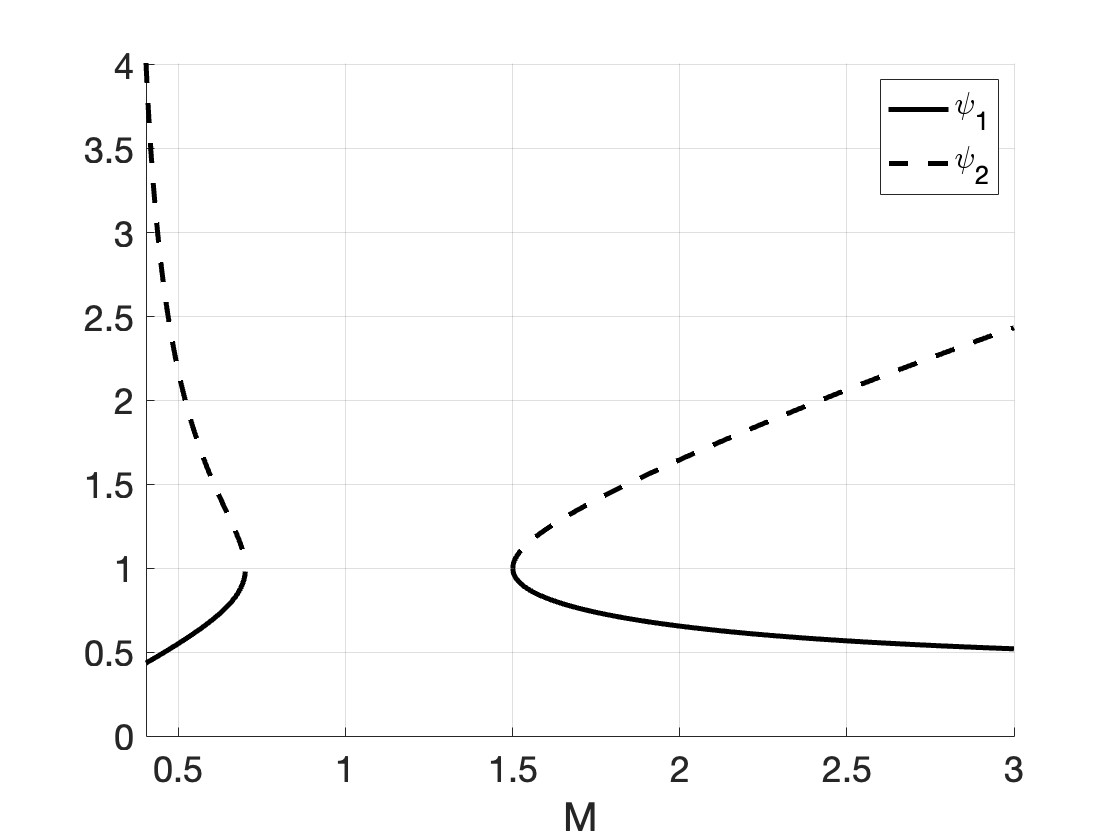}
	\caption{Behaviour of the functions $\psi_1$ and $\psi_2$ with respect to different $M$.}
	\label{fig1}
\end{figure}
It is evident that the Mach numbers $M_-$ and $M_+$ are restricted to certain ranges. To illustrate this, we denote
\begin{equation*}
	\begin{aligned}
	M_1^*&=
	M_1^*(k)=\begin{cases}
		\left[ {\frac{k\gamma+k+1- (\gamma+1)\sqrt{k(k+1)}}{k+1-k\gamma^2}}\right] ^{\frac{1}{2}},\ &k \ne \frac{1}{\gamma^2-1},\ k>0\\
		\left[{\frac{\gamma-1}{2\gamma}}\right] ^{\frac{1}{2}},\ &k = \frac{1}{\gamma^2-1}
	\end{cases},\\
	M_2^*&=
	M_2^*(k)=\begin{cases}
		\left[ {\frac{k\gamma+k+1+ (\gamma+1)\sqrt{k(k+1)}}{k+1-k\gamma^2}}\right]^{\frac{1}{2}},\ & k < \frac{1}{\gamma^2-1}, \\
		\infty,\ & 0\leq k \geq \frac{1}{\gamma^2-1}
	\end{cases}.\\
	M_3^*&=
	M_3^*(k)=\left[ \frac{\sqrt{(k\gamma^2+1)(k+1)}-(k\gamma+1)}{k\gamma(\gamma-1)}\right] ^{\frac{1}{2}}, \ k>0.
	\end{aligned}
\end{equation*}
\begin{Proposition}\label{P2}
	The two functions \( \psi_1 \) and \( \psi_2 \) have the following properties:
	\begin{itemize}
		\item [(i)] 
		$\psi_1(M,k)\in C^\infty\left( (0,M_1^*] \cup [M_2^{*},\infty), (0,1]\right) $. \\
		$\psi_2(M,k)\in C^\infty \left( (M_3^*,M_1^*] \cup [M_2^*,\infty), [1,\infty)\right) $.
		 \item [(ii)] $\lim\limits_{M\to 0} \psi_1(M,k)=0$, $\lim\limits_{M\to M_3^*}\psi_2(M,k)=\infty$.\\
		 $\lim\limits_{M\to \infty} \psi_1(M,k)=0$, $\lim\limits_{M\to \infty}\psi_2(M,k)=\infty$.\\  $\psi_1(M_1^*,k)=\psi_2(M_1^*,k)=\psi_1(M_2^*,k)=\psi_2(M_2^*,k)=1$.
		 \item [(iii)] If $M\leq M_1^*(k)$, then $\frac{\partial \psi_1}{\partial M}(M,k)>0$ and $\frac{\partial \psi_2}{\partial M}(M,k)<0$;\\
		 If $M\geq M_2^{*}(k)$, then $\frac{\partial \psi_1}{\partial M}(M,k)<0$ and $\frac{\partial \psi_2}{\partial M}(M,k)>0$.
		 \item  [(iv)]
		 $\psi_2(M,k)=\sqrt{\frac{(\gamma-1)\psi_1^2(M,k)+2}{2\gamma \psi_1^2(M,k)+1-\gamma}}$,\quad 
		 $\psi_1(M,k)=\sqrt{\frac{(\gamma-1)\psi_2^2(M,k)+2}{2\gamma \psi_2^2(M,k)+1-\gamma}}$,\quad 
		 for any $k\geq 0$.
		 \item [(v)]
		 $M_2^*=\sqrt{\frac{(\gamma-1)(M_1^*)^2+2}{2\gamma (M_1^*)^2+1-\gamma}}$,\quad 
		 $M_1^*=\sqrt{\frac{(\gamma-1)(M_2^*)^2+2}{2\gamma (M_2^*)^2+1-\gamma}}$,\quad 
		 for any $k\geq 0$.
		 \item [(vi)]
		 For $i=1,2$, if $\psi_i(M_1,k)=\psi_i(M_2,k)$, then $M_1=\sqrt{\frac{(\gamma-1)M_2+2}{2\gamma M_2+1-\gamma}}$ and $M_2=\sqrt{\frac{(\gamma-1)M_1+2}{2\gamma M_1+1-\gamma}}$.
	\end{itemize}
\end{Proposition}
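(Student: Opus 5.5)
The plan is to reduce everything to a single scalar function of the Mach number, the Rayleigh function
\[
R(M):=\frac{(\gamma+1)M^2\left[(\gamma-1)M^2+2\right]}{(\gamma M^2+1)^2},\qquad M>0 .
\]
I will use the relations \eqref{E14}, which already encode mass and momentum conservation. Substituting them into the energy relation $(1+k)(h_-+u_-^2/2)=h_++u_+^2/2$ and using $h=c^2/(\gamma-1)$ gives the stagnation-temperature relation
\[
R(M_+)=(1+k)\,R(M_-).
\]
A short derivative computation shows that $R(0)=0$, that $R$ increases strictly on $(0,1)$ to its maximum $R(1)=1$, and that it decreases strictly on $(1,\infty)$ to the limit $(\gamma^2-1)/\gamma^2$.

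Next I will solve $R(M_+)=(1+k)R(M_-)$ explicitly. With $x=M_+^2$ and $r=(1+k)R(M_-)$, the equation is a quadratic in $x$. Its discriminant, after normalisation, is exactly $I^2=1-(1+k)R(M_-)$, which is the expression defining $I$ in the statement. The two roots are
\[
x=\frac{1-I}{1+\gamma I}\qquad\text{and}\qquad x=\frac{1+I}{1-\gamma I}.
\]
I will read the formula for $\psi_2$ in the statement with this quotient, so $\psi_2=\sqrt{(1+I)/(1-\gamma I)}$.

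From the explicit roots the domain statements in (i) follow directly.
\begin{itemize}
\item Real roots require $(1+k)R(M)\le 1$. This is a quadratic inequality in $M^2$, and its boundary roots are $(M_1^*)^2$ and $(M_2^*)^2$. When $k\ge 1/(\gamma^2-1)$ the leading coefficient changes sign, which is where $M_2^*=\infty$ comes from.
\item $\psi_1$ is then defined and lies in $(0,1]$.
\item $\psi_2$ additionally needs $1-\gamma I>0$. Solving $I=1/\gamma$ as a quadratic in $M^2$ gives $M_3^*$, and $\psi_2\to\infty$ as $I\uparrow 1/\gamma$, which is the second limit in (ii).
\item The remaining limits and the equalities $\psi_i(M_j^*,k)=1$ in (ii) follow from $I\to 1$ as $M\to0$, $I\to0$ at $M_j^*$, and $I\to 1/\gamma$ or $I\to 1$ as $M\to\infty$. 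The last two cases need a short check of the asymptotics of $R$.
\item Smoothness holds wherever $I>0$. At the endpoints $M_j^*$ I only claim continuity, since $I$ has a square-root singularity there.
\end{itemize}

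For (iii) I will write $\psi_1=(R|_{(0,1]})^{-1}\circ\big((1+k)R\big)$ and $\psi_2=(R|_{[1,\infty)})^{-1}\circ\big((1+k)R\big)$. The sign of the derivative then follows from the chain rule and the monotonicity of $R$ on each side of $M=1$. On $M\le M_1^*<1$ the factor $R$ is increasing, and on $M\ge M_2^*>1$ it is decreasing. The inverse branch is increasing on $(0,1]$ and decreasing on $[1,\infty)$, and this produces the four stated signs.

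For (iv)--(vi) the key identity is that $R(M)=R(N)$ with $N\ne M$ holds if and only if
\[
N^2=\frac{(\gamma-1)M^2+2}{2\gamma M^2+1-\gamma}.
\]
This is the normal-shock Mach relation, and I will verify it by factoring $R(M)-R(N)$ as $(M^2-N^2)$ times a bilinear expression in $M^2,N^2$.
\begin{itemize}
\item Since $\psi_1(M,k)$ and $\psi_2(M,k)$ are the two distinct preimages of the same value $(1+k)R(M)$, the identity gives (iv).
\item Since $M_1^*$ and $M_2^*$ are the two preimages of $1/(1+k)$, it gives (v).
\item If $\psi_i(M_1,k)=\psi_i(M_2,k)$, then $R(M_1)=R(M_2)$, which gives (vi). This holds with squared arguments $M_1^2,M_2^2$ inside the fraction, which is presumably what is intended, and it requires $M_1\ne M_2$.
\end{itemize}

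The main obstacle is the algebraic bookkeeping: checking that the discriminant matches the given $I$, and identifying the roots $M_1^*$, $M_2^*$, $M_3^*$ together with the sign cases in $k$. Everything else is monotonicity of the single function $R$.
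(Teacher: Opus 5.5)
\textbf{Gap in (ii) as $M\to\infty$.} You assert that $I\to 1/\gamma$ or $I\to 1$ as $M\to\infty$. Your own Rayleigh-function set-up rules this out. Since $R(M)\to(\gamma^2-1)/\gamma^2$,
\[
I(M,k)^2=1-(1+k)R(M)\;\longrightarrow\;\frac{1-k(\gamma^2-1)}{\gamma^2},\qquad\text{so}\qquad I\to I_\infty:=\frac{\sqrt{1-k(\gamma^2-1)}}{\gamma}.
\]
For $k>0$, the interval $[M_2^*,\infty)$ is nonempty only when $0<k<1/(\gamma^2-1)$. In that range $0<I_\infty<1/\gamma$, which gives $\psi_1(M,k)\to\sqrt{(1-I_\infty)/(1+\gamma I_\infty)}>0$ and $\psi_2(M,k)\to\sqrt{(1+I_\infty)/(1-\gamma I_\infty)}<\infty$. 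In your own language: the target value $(1+k)R(M)$ tends to $(1+k)(\gamma^2-1)/\gamma^2$, which lies strictly inside $\big((\gamma^2-1)/\gamma^2,1\big)$, so both preimages stay bounded away from $0$ and $\infty$. For example, with $\gamma=1.4$ and $k=0.1$ the limits are about $0.41$ and $5.84$. Even at $k=0$, $\psi_1\to\sqrt{(\gamma-1)/(2\gamma)}$, the normal-shock limit, not $0$. So $\lim_{M\to\infty}\psi_1=0$ and $\lim_{M\to\infty}\psi_2=\infty$ are false as stated, and the ``short check'' you defer cannot succeed. The right move is to compute these finite limits and flag that part of (ii). The paper gives no proof of this proposition (it is attributed to Cheng et al.), so there is nothing there that would rescue these two limits.

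\textbf{Smaller restriction in (v).} Your argument that ``$M_1^*,M_2^*$ are the two preimages of $1/(1+k)$'' needs $M_2^*<\infty$, so it covers only $k<1/(\gamma^2-1)$ (plus the borderline case by convention). For $k>1/(\gamma^2-1)$ we have $R(M_1^*)=1/(1+k)<(\gamma^2-1)/\gamma^2=R\big(\sqrt{(\gamma-1)/(2\gamma)}\big)$, hence $(M_1^*)^2<(\gamma-1)/(2\gamma)$. Then the right-hand side of the first identity in (v) is not real, and ``for any $k\ge 0$'' must be restricted. Likewise, (iv) only makes sense on the common domain $(M_3^*,M_1^*]\cup[M_2^*,\infty)$.

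\textbf{What is correct.} Apart from these points, your route is sound and self-contained. This includes the reduction to $R(M_+)=(1+k)R(M_-)$, the discriminant $I^2=1-(1+k)R(M_-)$, the roots $(1-I)/(1+\gamma I)$ and $(1+I)/(1-\gamma I)$, and the monotonicity of $R$ on either side of $M=1$. It also includes the factorisation $R(M)-R(N)\propto (M^2-N^2)\big[(\gamma-1)(M^2+N^2)+2-2\gamma M^2N^2\big]$. Your corrections to the statement are all justified: the quotient form of $\psi_2$, only continuity at $M_j^*$ (with infinite one-sided slopes there), and squared arguments with $M_1\neq M_2$ in (vi).
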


It is observed that, in general, a given left boundary state $\vU_-$ may correspond to two different right boundary states, and symmetrically, a given right boundary state may correspond to two different left boundary states.
The existence of multiple branches of boundary states potentially gives rise to non-uniqueness of the coupled Riemann solutions; however, such non-uniqueness is not an immediate consequence.

When constructing coupled Riemann solutions in the sense of Theorem~\ref{T1}, the admissible boundary states are required to belong to the sets $\cV_L(\vU_L)$ and $\cV_R(\vU_R)$.
It is important to note that one of the two branches corresponds to a subsonic state, while the other corresponds to a supersonic state, as shown in Proposition~\ref{P2} (i).
This difference directly influences the wave structure of the associated classical Riemann problem and, consequently, the validity of the conditions $\cV_L(\vU_L)$ and $\cV_R(\vU_R)$.
Therefore, the uniqueness of the coupled Riemann solution is determined by the admissible range of the boundary states.
In what follows, we show uniqueness of the Riemann solution in the case where the left-hand boundary state is supersonic.

\begin{Theorem}\label{E13}
	For any given Riemann data $\vU_L$ and $\vU_R$, there exists at most one weak solution satisfying the condition $M_-> 1$.
\end{Theorem}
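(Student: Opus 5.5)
The plan is to show that if $M_->1$, the left boundary state is forced to be $\vU_-=\vU_L$, and that the right half-problem is then fixed as well. By Theorem~\ref{T1}, every weak solution corresponds to a pair $(\vU_-,\vU_+)$ with $\vU_-\in\cV_L(\vU_L)$, $\vU_+\in\cV_R(\vU_R)$ and $\Psi(\vU_-,\vU_+)=0$. It therefore suffices to prove that at most one such pair exists with $M_->1$.

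\textbf{Step 1: pin down $\vU_-$.} Suppose $M_->1$, i.e.\ $u_->c_->0$, so all three characteristic speeds $\lambda_i(\vU_-)$ are positive. By definition of $\cV_L(\vU_L)$, the extended function $\vV(\frac{x}{t};\vU_L,\vU_-)$ is a classical Riemann solution, and every one of its waves has nonpositive speed. The state immediately to the left of $x=0$ is $\vU_-$. Consider the last wave before $\vU_-$, say in the $i$-th family.
\begin{itemize}
\item If it is a rarefaction, its right edge travels at $\lambda_i(\vU_-)>0$.
\item If it is a contact, its speed is $u_->0$.
\item If it is an $i$-shock, the Lax condition of Definition~\ref{Lax} gives $\sigma>\lambda_i(\vU_-)>0$.
\end{itemize}
Each case contradicts the requirement that the wave speed be nonpositive. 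Hence no wave is present, and $\vU_-=\vU_L$; in particular $M_L>1$. If $M_L\le 1$, no such solution exists at all, which is consistent with ``at most one''.

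\textbf{Step 2: restrict $\vU_+$.} With $\vU_-=\vU_L$ fixed, the coupling relations \eqref{E14} express $\vU_+$ entirely through $M_+$. By Proposition~\ref{P2}, $M_+$ is either $\psi_1(M_L,k)\le1$ or $\psi_2(M_L,k)\ge1$, defined for $M_L\ge M_2^*$. So there are at most two candidate right states, $\vU_+^{(1)}$ (subsonic) and $\vU_+^{(2)}$ (supersonic), and by \eqref{E5} each candidate determines the solution uniquely. It remains to show that $\vU_+^{(1)}$ and $\vU_+^{(2)}$ cannot both belong to $\cV_R(\vU_R)$.

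\textbf{Step 3: the main obstacle.} The supersonic candidate is handled by the mirror of Step 1. Since $u_+>c_+$, all characteristic speeds at $\vU_+$ are positive. For waves in the right half-problem to have positive speed, the solution $\vV(\cdot;\vU_+,\vU_R)$ may contain all three waves, so membership is not automatic. I would therefore compare the two candidates directly. Note first that they satisfy the normal-shock relation of Proposition~\ref{P2}(iv). Together with the common mass and momentum fluxes, this means $\vU_+^{(1)}$ and $\vU_+^{(2)}$ are connected by a stationary $3$-shock: the left state is supersonic with $M>1$, and the jump satisfies the Rankine--Hugoniot relations at speed $0$.

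Now suppose both candidates lie in $\cV_R(\vU_R)$.
\begin{itemize}
\item The subsonic state $\vU_+^{(1)}$ has $\lambda_1<0<\lambda_2,\lambda_3$. Its right-going solution to $\vU_R$ then uses only the $2$- and $3$-waves, with positive speeds. Equivalently, $\vU_R$ lies on the $2$--$3$ wave curves through $\vU_+^{(1)}$.
\item The supersonic state gives a full solution from $\vU_+^{(2)}$ to $\vU_R$ with all wave speeds positive.
\end{itemize}
Prepending the stationary shock from $\vU_+^{(2)}$ to $\vU_+^{(1)}$ gives a second Riemann solution from $\vU_+^{(2)}$ to $\vU_R$. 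This one consists of a $3$-shock at speed $0$ followed by waves of positive speed. By the uniqueness part of Proposition~\ref{P3} (non-vacuum case), the two solutions must coincide.

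That is a contradiction. The first solution has every wave at strictly positive speed, in particular its $1$-wave (if any). The second has either no $1$-wave, or a configuration with a zero-speed wave that the first lacks. The delicate point is making this comparison rigorous. I will check the $1$-wave strength and the speeds family by family, using the ordering of waves in a classical Riemann solution, and I expect this to be where most of the work lies.

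Combining Steps 1--3 yields at most one admissible pair $(\vU_-,\vU_+)$ with $M_->1$, hence at most one weak solution.
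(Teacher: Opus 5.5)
Your route is the paper's. Step 1 (every wave of $\vV(\cdot;\vU_L,\vU_-)$ must vanish, so $\vU_-=\vU_L$) and Step 2 (two candidates $\vU_+^{(1)}$, $\vU_+^{(2)}$) match it exactly. Step 3 uses the same idea: the two candidates are joined by a stationary shock, and uniqueness of the classical Riemann problem then excludes the supersonic candidate from $\cV_R(\vU_R)$ once the subsonic one lies there.

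\textbf{The error in Step 3.} As written, Step 3 contains an error that breaks the concatenation: the stationary shock from $\vU_+^{(2)}$ to $\vU_+^{(1)}$ is a 1-shock, not a 3-shock. With $u>0$ and $\sigma=0$ you have $\lambda_1(\vU_+^{(2)})>0$ and $\lambda_1(\vU_+^{(1)})<0<\lambda_2(\vU_+^{(1)})$. This is the Lax condition of Definition~\ref{Lax} with $i=1$, whereas $\lambda_3>0$ on both sides rules out $i=3$. The label matters. A 3-shock followed by the 2- and 3-waves of $\vV(\cdot;\vU_+^{(1)},\vU_R)$ is not a valid wave ordering, so your concatenation would not be a classical Riemann solution, and the uniqueness in Proposition~\ref{P3} could not be invoked. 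With the correct label the sequence is a 1-shock at speed $0$, a contact at speed $u_+^{(1)}>0$, and a 3-wave of positive speed. That is a legitimate entropy solution from $\vU_+^{(2)}$ to $\vU_R$.

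\textbf{The closing step.} Once the label is fixed, the step you flag as ``delicate'' is a one-liner, and the possibility of ``no 1-wave'' disappears. The 1-wave of the unique solution $\vV(\cdot;\vU_+^{(2)},\vU_R)$ is the nontrivial stationary shock, so $\vV(0+;\vU_+^{(2)},\vU_R)=\vU_+^{(1)}\neq\vU_+^{(2)}$. This contradicts the defining property of $\cV_R(\vU_R)$, and no family-by-family comparison of wave strengths is needed. The paper argues exactly this way. It does not even assume $\vU_+^{(2)}\in\cV_R(\vU_R)$ first; it computes $\vV(0+;\vU_+^{(2)},\vU_R)$ directly from $\vU_+^{(1)}\in\cV_R(\vU_R)$.

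\textbf{Degenerate case.} Also dispose of $M_L=M_2^*$, where $\psi_1=\psi_2=1$ and the two candidates coincide, so uniqueness is trivial. Otherwise $M_+^{(1)}<1<M_+^{(2)}$ guarantees the shock is nontrivial.
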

\begin{proof}
	It suffices to show that the pair of boundary states $(\vU_-,\vU_+)$ is unique.  
	
	For the left classical Riemann problem $\vV\!\left( \tfrac{x}{t}, \vU_L,\vU_- \right)$, the Lax entropy condition implies successively that the waves associated with the $3$-, $2$-, and $1$-characteristic fields must all vanish. Hence, we must have $\vU_-=\vU_L$.  
	
	By Proposition~\ref{P2} (i), the right boundary state $\vU_+$ admits two possible candidates: a subsonic state, denoted by $\vU_{+}^1$, and a supersonic state, denoted by $\vU_{+}^2$.  
	We claim that $\vU_{+}^2 \notin \cV_R(\vU_R)$ whenever a solution with right boundary state $\vU_{+}^1$ exists.  
	
	Indeed, Proposition~\ref{P2} (iv) and uniqueness of the classical Riemann problem yield
	\[
	\vV\!\left(\tfrac{x}{t}; \vU_{+}^2,\vU_R\right) =
	\begin{cases}
		\vU_+^2, & x<0, \\[0.3em]
		\vV\!\left(\tfrac{x}{t}; \vU_{+}^1,\vU_R\right), & x>0 .
	\end{cases}
	\]
	Since $\vU_{+}^1 \in \cV_R(\vU_R)$, it follows that
	\[
	\vV\!\left(0+; \vU_{+}^2,\vU_R\right) 
	= \vV\!\left(0+; \vU_{+}^1,\vU_R\right)
	= \vU_+^1 \neq \vU_+^2 ,
	\]
	Therefore, $\vU_{+}^2 \notin \cV_R(\vU_R)$, the right boundary state $\vU_+$ is unique, and the theorem follows.
\end{proof}

However, the presence of multiple branches indeed leads to the non-uniqueness of the Riemann solutions with respect to the initial data.
More precisely, although Theorem~\ref{E13} ensures that the Riemann solutions with supersonic left boundary states are unique, alternative Riemann solutions with subsonic left boundary states can still be constructed.
Next, we establish the non-uniqueness of entropy solutions for a broad class of Riemann data.
This non-uniqueness originates from the inverse branch, which implies that, for a given right boundary state, two different left boundary states can exist.

\begin{Theorem}[\bf Non-uniqueness of entropy solutions]\label{E11}
	Let $k>0$ be fixed.
	Suppose $\vU$ is an entropy solution of the coupled Riemann problem with initial data $(\vU_L,\vU_R)$ satisfying $M_->1$ and $M_+>1$.
	Then there exists another entropy solution $\tvU$ with the same initial data such that $\tM_-<1$ and $\tM_+>1$.
\end{Theorem}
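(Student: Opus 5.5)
The plan is to keep the right half of the given solution unchanged and to modify only the left half. We replace the trivial left wave pattern by a stationary $1$-shock that takes the flow from supersonic to subsonic. The heat-flux coupling is then applied to the post-shock state, and we show it produces the same right boundary state $\vU_+$.

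\textbf{Step 1: the given solution.} Let $\vU$ be the given entropy solution with $M_->1$ and $M_+>1$. By the argument in the proof of Theorem~\ref{E13}, all left waves vanish, so $\vU_-=\vU_L$ and in particular $M_L=M_->1$. Since $M_+>1$, Proposition~\ref{P2}(i) shows that $M_+$ lies on the supersonic branch, $M_+=\psi_2(M_L,k)$, and that $M_L\ge M_2^*(k)$. We also have $\vU_+\in\cV_R(\vU_R)$, i.e.\ every wave of $\vV(\cdot;\vU_+,\vU_R)$ has positive speed.

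\textbf{Step 2: the new left boundary state.} Define $\tvU_-$ as the state connected to $\vU_L$ by a stationary normal shock. It is the point $L_1^+(\eps;\vU_L)$ whose shock speed is $\sigma=0$, which corresponds to $\eps=\frac{2\gamma}{\gamma+1}(M_L^2-1)$ in the counterexample of Section~\ref{sec2}. Its Mach number is
\[
\tM_-=\sqrt{\tfrac{(\gamma-1)M_L^2+2}{2\gamma M_L^2+1-\gamma}}<1 .
\]
By Proposition~\ref{P2}(v),(vi) we get $\tM_-\le M_1^*$ and $\psi_2(\tM_-,k)=\psi_2(M_L,k)=M_+$.

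To show $\Psi(\tvU_-,\vU_+)=0$ at the level of states, and not only of Mach numbers, we use that the Rankine--Hugoniot conditions with $\sigma=0$ preserve three quantities. These are the mass flux $\rho u$, the momentum flux $\rho u^2+p$, and the total enthalpy $h+u^2/2$. The coupling relations depend on the left state only through exactly these three quantities. Hence the relations $\rho_Lu_L=\rho_+u_+$, $\rho_Lu_L^2+p_L=\rho_+u_+^2+p_+$ and $(1+k)H_L=H_+$ transfer verbatim to $\tvU_-$.

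\textbf{Step 3: the new solution.} Set $\tvU_+:=\vU_+$, so the right half-Riemann problem and the condition $\tvU_+\in\cV_R(\vU_R)$ are inherited from $\vU$. On the left, the classical Riemann problem $\vV(\cdot;\vU_L,\tvU_-)$ consists of a single $1$-shock of speed $0$. The Lax condition of Definition~\ref{Lax}(i) holds for it:
\[
\lambda_1(\tvU_-)=\tu_--\tilde c_-<0<\tu_-,\qquad \lambda_1(\vU_L)=u_L-c_L>0,
\]
both following from $\tM_-<1<M_L$. Moreover $\lambda_1(\tvU_-)\le 0<\lambda_2(\tvU_-)$, so $\tvU_-\in\overline{\cV}_L(\vU_L)$. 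Theorem~\ref{T1} then produces the entropy solution $\tvU$ via \eqref{E5}, with $\tM_-<1$ and $\tM_+=M_+>1$. It differs from $\vU$ because $\tvU_-\neq\vU_L$.

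\textbf{Main obstacle.} The difficulty is the membership $\tvU_-\in\cV_L(\vU_L)$, because the left shock has speed exactly $0$ and so sits on the coupling interface. The requirement $\vV(\frac{x}{t}-;\vU_L,\tvU_-)\equiv\tvU_-$ for $x\ge0$ holds for $x>0$, but at $x=0$ the left limit picks up $\vU_L$. I would first handle this through the trace convention: the trace $\vU_-$ in Definition~\ref{def1} is the limit $\epsilon\to0^-$ taken in $\Omega_-$, and the zero-speed shock is regarded as lying in $\overline{\Omega_-}$. This is consistent with the non-strict inequality $\lambda_M\le 0$ in $\overline{\cV}_L$. One must check that this choice does not fall into the resonant case excluded in the Remark after Theorem~\ref{T1}.

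If that convention is unacceptable, the fallback is a perturbation argument. Take a slightly weaker $1$-shock with small negative speed, obtain a state $\tvU_-^\delta$, and solve for the coupled $\tvU_+^\delta$ on the $\psi_2$ branch. One then uses continuity of $\psi_2$ and Proposition~\ref{P3} to show that $\tvU_+^\delta$ stays in $\cV_R(\vU_R)$; since the right waves have strictly positive speeds, they move continuously. The cost is that $\vU_R$ is then connected to a perturbed $\tvU_+^\delta$, and whether $\tvU_+^\delta$ remains reachable from $\vU_R$ is the point that would need checking.
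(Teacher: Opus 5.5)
\textbf{Gap in the primary construction.} It does not produce a second solution. The $1$-shock joining $\vU_L$ to $\tvU_-$ has speed exactly $0$, so formula \eqref{E5} gives $\tvU(x,t)=\vU_L$ for every $x<0$ and $\tvU(x,t)=\vV(\tfrac{x}{t};\vU_+,\vU_R)$ for $x>0$. That is literally the function $\vU$. Its left trace at the interface is $\vU_L$, with Mach number $M_L>1$. Hence both ``another solution'' and ``$\tM_-<1$'' fail.

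Your sentence ``it differs from $\vU$ because $\tvU_-\neq\vU_L$'' confuses the data of the left half-Riemann problem with the actual trace. No trace convention can fix this, since the limit $\epsilon\to0^-$ in Definition~\ref{def1} is $\vU_L$. The paper itself uses that $\sigma=0$ implies $\vU_-\notin\cV_L(\vU_L)$ (Step~1 of the proof of Theorem~\ref{E8}). Moreover, a shock sitting on the interface is exactly the resonant configuration excluded after Theorem~\ref{T1} (cf.\ Remark~\ref{R2}).

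Your Rankine--Hugoniot observation is correct: $\rho u$, $\rho u^2+p$ and $h+u^2/2$ are preserved, hence $\Psi(\tvU_-,\vU_+)=0$. But it only supplies the base point $\overline{\vU}_-$ of the real argument.

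\textbf{The fallback.} This is the paper's route, but as stated it perturbs in the wrong direction. The speed $\sigma=u_L-c_L\sqrt{\tfrac{\gamma+1}{2\gamma}\tfrac{p_-}{p_L}+\tfrac{\gamma-1}{2\gamma}}$ decreases in $p_-$. So a weaker shock than the stationary one moves to the right, which is the counterexample in Section~\ref{sec2}, and it gives a state outside $\cV_L(\vU_L)$.

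You need a slightly stronger shock: $\tvU_-=L_1^+(\overline{\eps}+\eps;\vU_L)$ with $\overline{\eps}=\tfrac{2\gamma}{\gamma+1}(M_L^2-1)$ and small $\eps>0$. Then $\sigma<0$, and $\tM_-$ lies slightly below $\overline{M}_-$, hence still in $(M_3^*,M_1^*)$. The $\psi_2$-branch then gives $\tvU_+$ with $\tM_+>1$.

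The point you leave open is closed as in the paper:
\begin{itemize}
\item $\|\tvU_+-\vU_+\|\aleq\eps$.
\item Condition \eqref{E6} is open, so Proposition~\ref{P3} yields $\vV(\cdot;\tvU_+,\vU_R)$ with intermediate states close to those of $\vV(\cdot;\vU_+,\vU_R)$.
\item All waves of $\vV(\cdot;\vU_+,\vU_R)$ have strictly positive speeds, since $\lambda_1(\vU_+)>0$ and $\vU_+\in\cV_R(\vU_R)$ rules out a $1$-shock of speed $\le0$.
\end{itemize}
This positivity persists for small $\eps$, so $\tvU_+\in\cV_R(\vU_R)$.
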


\begin{proof}
	We construct $\tvU$ explicitly.  
	
	By the Lax entropy condition we have $\vU_-=\vU_L$ with $M_L>1$.  
	Choose
	\[
	\overline{\vU}_-=L_1^+(\overline{\eps}, \vU_L), \quad \overline{\eps}>0,
	\]
	so that $\vU_L$ and $\overline{\vU}_-$ constitute a stationary shock wave. Consequently,
	\[
	\psi_2(\overline{\vU}_-,k)=\vU_+ .
	\]
	
	Next, perturb slightly along the 1-wave curve:
	\[
	\tvU_- = L_1^+(\overline{\eps}+\eps, \vU_L), \quad \eps>0,
	\]
	and set $\tvU_+=\psi_2(\tvU_-,k)$.  
	We claim that $\tvU_- \in \cV_L(\vU_L)$ and $\tvU_+ \in \cV_R(\vU_R)$.  
	
	Since $\eps$ is sufficiently small, $\vU_L$ and $\tvU_-$ are connected by a shock wave with speed
	\[
	\begin{aligned}
		\sigma
		&= u_L - c_L \sqrt{\Bigl(\tfrac{\gamma+1}{2\gamma}\Bigr)\Bigl(\tfrac{\widetilde{p}^-}{p_L}\Bigr) + \tfrac{\gamma-1}{2\gamma}} \\
		&< u_L - c_L \sqrt{\Bigl(\tfrac{\gamma+1}{2\gamma}\Bigr)\Bigl(\tfrac{\overline{p}^-}{p_L}\Bigr) + \tfrac{\gamma-1}{2\gamma}} = 0 ,
	\end{aligned}
	\]
	which implies $\tvU_- \in \cV_L(\vU_L)$.  
	
	Moreover,
	\[
	\|\tvU_+ - \vU_+\|
	= \|\psi_2(\tvU_-,k)-\psi_2(\overline{\vU}_-,k)\|
	\aleq \|L_1^+(\overline{\eps}+\eps,\vU_L)-L_1^+(\overline{\eps},\vU_L)\|
	\aleq |\eps| .
	\]
	By Proposition~\ref{P3}, the classical Riemann solution $\vV(\tfrac{x}{t}; \tvU_+,\vU_R)$ exists, with intermediate states close to those of $\vV(\tfrac{x}{t}; \vU_+,\vU_R)$.  
	Since $M_+>1$ and $\vU_+\in \cV_R(\vU_R)$, it follows that $\tvU_+\in \cV_R(\vU_R)$.  
	
	Hence, the pair $(\tvU_-,\tvU_+)$ yields another entropy solution $\tvU$, completing the proof.
\end{proof}

\section{Admissible Weak Solutions}\label{sec4}
In view of Theorem~\ref{E11}, additional admissibility criterion is needed to select physically-relevant solutions. In the present work, we employ the evolutionarity criterion of Landau and Lifshitz~\cite{landau1987fluid}, originally formulated for general discontinuities in fluid dynamics. We further note its successful application to duct flows in the work of Warnecke and Andrianov~\cite{warnecke2004solution}.

\begin{Definition}[\bf Evolutionarity criterion]
	Consider a discontinuity $\Sigma$ in a physical flow, which is governed by a $d\times d$ hyperbolic system. Denote the number of charactristics incoming to $\Sigma$ by $n$ and coinciding with $\Sigma$ by $c$, further, denote the number of unknown variables on both sides of $\Sigma$ together with the speed of $\Sigma$ by $N=2d+1$, and the number of relations across $\Sigma$ by $m$, then $\Sigma$ is called evolutionary if 
	\begin{equation*}
		N=n+c+m.
	\end{equation*}
\end{Definition}

In the present framework, the coupling interface is regarded as a stationary discontinuity across which three relations, referred to as the coupling conditions, are imposed. According to the evolutionarity criterion, exactly three characteristics must either enter or coincide with the interface. For the Euler system, the characteristic fields are associated with the eigenvalues
\begin{equation*}
	\lambda_1(\vU)=u-c,\quad \lambda_2(\vU)=u, \quad \lambda_3(\vU)=u+c.
\end{equation*}

Under the assumption $u_->0$ and $u_+>0$, the characteristics $\lambda_2$ and $\lambda_3$ are always incoming from the left state $\vU_-$ and outgoing from the right state $\vU_+$. Consequently, for a given pair of boundary states, exactly one characteristic is incoming to or coinciding with the interface. By invoking the evolutionarity criterion, the admissible entropy solutions of the coupled Riemann problem are thus required to satisfy the following admissibility condition.

\begin{Definition}[\bf Monotonicity criterion]\label{MC1}
	For any fixed parameter $k>0$, the left and right boundary states $\vU_-$ and $\vU_+$ are required to satisfy
	\begin{equation*}
		\lambda_i(U_-)\cdot \lambda_i(U_+)\geq 0, \quad \forall i=1,2,3
	\end{equation*}
	where $\lambda_i(\cdot)$ denote the characteristic speeds of the Euler system.
\end{Definition}

Observe that the case $\lambda_1(\vU_-)= \lambda_1(\vU_+)=0$ cannot occur when assuming $k\ne 0$. 
Consequently, the monotonicity criterion is equivalent to the evolutionarity criterion. 
In this framework, admissible solutions are understood as entropy weak solutions that additionally satisfy the monotonicity criterion.

The monotonicity criterion naturally arises in the classical theory of one-dimensional flow with heat addition; see \cite[Section~3.8]{anderson1990modern}. 
The Rayleigh curve shows that heat addition always drives the flow toward the sonic state, which corresponds to the state of maximum entropy. 
However, the flow cannot cross the sonic point solely through heat addition. 
For a fixed upstream state, there exists a maximum admissible heat input that brings the downstream state exactly to sonic conditions. 
When this limit is attained, the flow is said to be \emph{choked}.

We select the branch of the Mach number that satisfies the monotonicity criterion:
\begin{equation*}
	M_+=\psi(M_-,k):=
	\begin{cases}
		\psi_1(M_-,k), & M_-\leq M_1^*, \\
		\psi_2(M_-,k), & M_-\geq M_2^*.
	\end{cases}
\end{equation*}
By Proposition~\ref{P2} (iii), we have $\frac{\partial \psi (M,k)}{\partial M}>0$, and it is easy to check that $\psi(M_-;0)=M_-$. 
Once the admissible Mach number $M_+$ is determined, the corresponding admissible density $\rho_+$, velocity $u_+$, and pressure $p_+$ follow. 
For convenience, we introduce the functions
\begin{align*}
	h_1(M,k)&:=\frac{\gamma M^2+1}{\gamma (\psi(M,k))^2+1}, \\
	h_2(M,k)&:=\frac{(\psi(M,k))^2(\gamma M^2+1)}{M^2(\gamma (\psi(M,k))^2+1)}.
\end{align*}
Then the admissible boundary states satisfy
\begin{equation}\label{E12}
	\frac{p_+}{p_-}=h_1(M_-,k)<1,\qquad 
	\frac{u_+}{u_-}=h_2(M_-,k)>1.
\end{equation}

\begin{Proposition}\label{P4}
	Suppose that $0<M<M_1^*(k)$ and $0<k(\gamma^2-1)<1$. Then
	\begin{equation}\label{E10}
		\frac{\partial  h_1(M,k)}{\partial M}<0, \qquad 
		\frac{\partial  h_2(M,k)}{\partial M}>0.
	\end{equation}
\end{Proposition}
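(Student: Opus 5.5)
The plan is to avoid the explicit radical $I(M,k)$ and instead work with the implicit Rayleigh relation that defines $\psi$. Set
\[
F(x):=\frac{x^2\bigl((\gamma-1)x^2+2\bigr)}{(\gamma x^2+1)^2},\qquad x>0 .
\]
Squaring the definition of $I$ and eliminating it from $\psi_1=\sqrt{(1-I)/(1+\gamma I)}$ should give the identity
\[
F\bigl(\psi(M,k)\bigr)=(1+k)\,F(M),\qquad 0<M\le M_1^*(k).
\]
This is just the classical relation for the stagnation-temperature ratio along the Rayleigh line. I would first verify it by direct algebra, or equivalently by recomputing it from the coupling relations and \eqref{E14}.

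A direct computation should give
\[
F'(x)=\frac{4x(1-x^2)}{(\gamma x^2+1)^3},
\]
so $F$ is strictly increasing on $(0,1)$. For $M<M_1^*$ we have $\psi=\psi_1\in(0,1)$ by Proposition~\ref{P2}(i); the hypothesis $0<k(\gamma^2-1)<1$ is used here, so that $M_1^*$ is given by the first formula and lies in $(0,1)$. Hence $M<1$ and $\psi<1$ on the whole range. Since $F(\psi)=(1+k)F(M)>F(M)$ and $F$ is increasing on $(0,1)$, I conclude $M<\psi<1$.

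Next I differentiate the implicit relation, which gives $F'(\psi)\,\psi'=(1+k)F'(M)$. Dividing by the relation itself yields
\[
\psi'=\frac{F(\psi)}{F'(\psi)}\cdot\frac{F'(M)}{F(M)},\qquad
\frac{F(x)}{F'(x)}=\frac{x\bigl((\gamma-1)x^2+2\bigr)(\gamma x^2+1)}{4(1-x^2)} .
\]

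For $h_1$, the sign of $\partial_M h_1$ equals the sign of
\[
\frac{M}{\gamma M^2+1}-\frac{\psi\psi'}{\gamma\psi^2+1}.
\]
Substituting the formula for $\psi'$, the claim $\partial_M h_1<0$ should reduce to $g(\psi)>g(M)$ with
\[
g(x)=\frac{x^2\bigl((\gamma-1)x^2+2\bigr)}{1-x^2}.
\]
The function $g$ is clearly increasing on $(0,1)$, so $\psi>M$ finishes this case.

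For $h_2$ the logarithmic derivative simplifies to
\[
\frac{2\psi'}{\psi(\gamma\psi^2+1)}-\frac{2}{M(\gamma M^2+1)}.
\]
The same substitution should reduce $\partial_M h_2>0$ to $\tilde g(\psi)>\tilde g(M)$ with
\[
\tilde g(x)=\frac{(\gamma-1)x^2+2}{1-x^2},
\]
which is again increasing on $(0,1)$.

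The main obstacle is the careful algebra. This includes confirming the identity $F(\psi)=(1+k)F(M)$ from the stated formula for $I$, and checking that all factors of $(\gamma x^2+1)$ cancel exactly in the two substitutions. The monotonicity conclusions themselves are then elementary.
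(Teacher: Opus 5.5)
Your proposal is correct, but it takes a genuinely different route from the paper.

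\textbf{The paper's route.} Appendix~B substitutes the explicit branch $\psi_1=\sqrt{(1-I)/(1+\gamma I)}$. This gives $h_1=(\gamma M^2+1)(1+\gamma I)/(\gamma+1)$ and $h_2=(\gamma M^2+1)(1-I)/((\gamma+1)M^2)$, both functions of $t=M^2$ containing one square root $\sqrt{D}$ of the radicand. The paper differentiates these directly and settles each sign as an inequality $N>\sqrt{D}$, where $N$ is an explicit numerator linear in $t$. It checks this by squaring (for $h_2$, $N^2-D=k(1+k)(\gamma+1)^2t^2$) together with $N>0$; for $h_2$ it is at this step that $k<1/(\gamma^2-1)$ is invoked.

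\textbf{Your route.} You avoid the radical and work with the invariant $F(\psi)=(1+k)F(M)$. This does hold: $I^2=1-(\gamma+1)(1+k)F(M)$, and $\psi_1^2=(1-I)/(1+\gamma I)$ gives $F(\psi_1)=(1-I^2)/(\gamma+1)$. With $\Phi:=F/F'>0$ on $(0,1)$, your substitution yields exactly
\[
\frac{M}{\gamma M^2+1}-\frac{\psi\psi'}{\gamma\psi^2+1}=\frac{g(M)-g(\psi)}{4\Phi(M)},\qquad
\frac{2\psi'}{\psi(\gamma\psi^2+1)}-\frac{2}{M(\gamma M^2+1)}=\frac{\tilde g(\psi)-\tilde g(M)}{2\Phi(M)}.
\]
Both claims therefore reduce to $\psi>M$ plus the monotonicity of elementary rational functions.

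\textbf{What each approach buys.} Yours is more transparent: the key fact is the Rayleigh-line statement $M<\psi<1$, i.e.\ heat addition pushes subsonic flow toward, but not past, the sonic point. It is also more general. You only use $M<M_1^*(k)<1$ and $\psi<1$, and these hold for every $k>0$, because $M_1^*$ is always the unique root in $(0,1)$ of $(1+k)F(M)=F(1)=1/(\gamma+1)$. So your remark that the hypothesis $k(\gamma^2-1)<1$ is used there is unnecessary, and your argument proves the proposition without it. The paper's route is more mechanical but produces explicit derivative formulas.

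\textbf{Two points to tighten.} Proposition~\ref{P2}(i) only gives $\psi_1\le 1$. Strictness comes from $F(\psi)=(1+k)F(M)<(1+k)F(M_1^*)=F(1)$, equivalently $I>0$. Also state that $\Phi(M)>0$ when you divide, so the signs are preserved.
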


The proof is deferred to Appendix~\ref{App1}.

The monotonicity criterion imposes constraints on the eigenvalues of the boundary states, thereby limiting the structure of the Riemann solutions to several possible configurations.

\begin{Theorem}\label{E9}
	For any given Riemann data $\vU_L$ and $\vU_R$, there exist exactly three structures of the admissible Riemann solutions for $k\ne 0$. The corresponding boundary states for each case are given as follows:
	\begin{itemize}
		\item \textit{Structure 1:} 
		\begin{align*}
			&\vU_-=L_1^+(\vU_L,\eps_1), \quad \lambda_1(\vU_-)<0,\\ 
			&\vU_+=L_2^-(\eps_3;L_3^-(\eps_4;\vU_R)), \ \lambda_1(\vU_+)<0.
		\end{align*}
		\item \textit{Structure 2:} 
		\begin{align*}
			&\vU_-=L_1^+(\vU_L,\eps_1),\quad \lambda_1(\vU_-)<0, \\ &\vU_+=L_1^-(\eps_2;L_2^-(\eps_3;L_3^-(\eps_4;\vU_R))),\quad
			\eps_2\leq 0,\quad \lambda_1(\vU_+)=0.
		\end{align*}
		\item \textit{Structure 3:} 
		\begin{align*}
			&\vU_-=\vU_L,\quad \lambda_1(\vU_-)>0, \\
			&\vU_+=L_1^{-}(\eps_2;L_2^-(\eps_3;L_3^-(\eps_4;\vU_R))),\quad \lambda_1(\vU_+)\geq 0.
		\end{align*}
	\end{itemize}
	Here $\eps_i\,(i=1,2,3,4)$ are parameters ensuring that the states belong to the admissible domain.
\end{Theorem}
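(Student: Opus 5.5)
The plan is a case analysis on the sign of $\lambda_1$ at the interface, using only Theorem~\ref{T1}, the monotonicity criterion (Definition~\ref{MC1}) and the wave ordering in the two classical Riemann problems.

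\emph{Step 1: reduce to $\lambda_1$.} By Theorem~\ref{T1}, an admissible solution is fixed by a pair $(\vU_-,\vU_+)\in\cV_L(\vU_L)\times\cV_R(\vU_R)$ with $\Psi(\vU_-,\vU_+)=0$. Since $u_\pm>0$, the speeds $\lambda_2,\lambda_3$ are positive on both sides. So Definition~\ref{MC1} only constrains the first field: $\lambda_1(\vU_-)\lambda_1(\vU_+)\ge 0$. The remark after Definition~\ref{MC1} excludes $\lambda_1(\vU_-)=\lambda_1(\vU_+)=0$. Moreover $\lambda_1(\vU_-)=0$ alone is impossible: it gives $M_-=1$, while Proposition~\ref{P2}(i) requires $M_-\le M_1^*<1$ or $M_-\ge M_2^*>1$ for $k>0$. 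This leaves three sign cases:
\begin{itemize}
\item[(a)] $\lambda_1(\vU_-)<0$ and $\lambda_1(\vU_+)<0$;
\item[(b)] $\lambda_1(\vU_-)<0$ and $\lambda_1(\vU_+)=0$;
\item[(c)] $\lambda_1(\vU_-)>0$ and $\lambda_1(\vU_+)\ge0$.
\end{itemize}

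\emph{Step 2: left half-plane.} Here $\vV(\cdot;\vU_L,\vU_-)$ is a classical Riemann solution whose waves all have negative speed, by the definition of $\cV_L$. The 2- and 3-waves adjacent to $\vU_-$ would need speeds at least $\lambda_2(\vU_-)=u_->0$ (using the Lax conditions for shocks and the contact speed $u_-$). Hence they vanish and $\vU_-=L_1^+(\eps_1;\vU_L)$.
\begin{itemize}
\item If $\lambda_1(\vU_-)>0$, any 1-wave would also move right: a rarefaction ends at $\lambda_1(\vU_-)>0$, and a shock has $\sigma>\lambda_1(\vU_-)$. So $\vU_-=\vU_L$, exactly as in the proof of Theorem~\ref{E13}.
\item If $\lambda_1(\vU_-)<0$, a 1-wave with negative speed is allowed, which gives the left part of Structures~1 and~2.
\end{itemize}

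\emph{Step 3: right half-plane.} This is symmetric. $\vV(\cdot;\vU_+,\vU_R)$ has only positive-speed waves, and $\vU_+$ is reached from $\vU_R$ by $L_1^-(\eps_2;L_2^-(\eps_3;L_3^-(\eps_4;\vU_R)))$. The 2- and 3-waves have positive speed automatically.
\begin{itemize}
\item Case (a), $\lambda_1(\vU_+)<0$: a 1-wave at $\vU_+$ would need a nonpositive speed near $\lambda_1(\vU_+)$, so $\eps_2=0$. This is Structure~1.
\item Case (b), $\lambda_1(\vU_+)=0$: only a 1-rarefaction fanning from speed $0$ is compatible, since a 1-shock would need $\sigma<\lambda_1(\vU_+)=0$. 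This gives the sign condition $\eps_2\le0$ under the parametrization of Appendix~\ref{App2}, i.e.\ Structure~2.
\item Case (c), $\lambda_1(\vU_+)\ge0$: a 1-wave with positive speed is permitted. This is Structure~3.
\end{itemize}

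\emph{Step 4: the word ``exactly''.} I would note that each structure is realized for suitable data, e.g.\ by perturbing the constructions used in Theorem~\ref{E11}.

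The main obstacle I expect is the boundary case (b): justifying $\eps_2\le 0$, i.e.\ that a 1-shock with $\lambda_1(\vU_+)=0$ cannot sit at positive speed. The shock Lax condition gives $\lambda_1(\vU_+)>\sigma$ for the left state $\vU_+$, hence $\sigma<0$, which contradicts $\vU_+\in\cV_R$. I would check carefully that this matches the sign convention of $L_1^-$ in Appendix~\ref{App2}. I would also track the degenerate sonic endpoints so that the three cases are genuinely exhaustive.
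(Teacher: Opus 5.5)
Your Steps 1--3 are correct, and they are essentially the necessity argument the paper relies on. The paper defers to a similar argument in the literature, and that argument is the same successive elimination of waves via the Lax conditions used in the proof of Theorem~\ref{E13}. Your explicit exclusion of $M_-=1$ via Proposition~\ref{P2}(i) is a detail the paper leaves implicit. Your worry about case (b) is already settled by your own Lax argument. What you show is that the right 1-wave is absent or a rarefaction, and that is exactly what $\eps_2\le 0$ encodes in the paper's convention for backward curves; compare the Corollary following the theorem. The sign convention in Appendix~\ref{App2} is that of the forward curve $L_1^+$, so do not try to match it literally for $L_1^-$.

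The step that fails as written is Step~4. The construction in Theorem~\ref{E11} produces a solution with $\tM_-<1<\tM_+$, i.e.\ $\lambda_1(\tvU_-)<0<\lambda_1(\tvU_+)$. This violates the monotonicity criterion, so perturbing that construction does not realize any admissible structure. Moreover, the unperturbed solution there is only assumed to exist, not constructed. The paper instead obtains realizability of each structure from Theorem~\ref{E7}, whose Cases (i), (ii), (iii) yield Structures 1, 2, 3 for small $k>0$.

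An even more elementary fix is available. Take a coupled pair $(\vU_-,\vU_+)$ with $M_-<M_1^*$, $M_-=M_1^*$, or $M_->M_2^*$, so that $M_+<1$, $M_+=1$, or $M_+>1$ respectively by Proposition~\ref{P2}. Then set $\vU_L=\vU_-$ and $\vU_R=\vU_+$. All waves are trivial, membership in $\cV_L(\vU_L)$ and $\cV_R(\vU_R)$ is immediate, and each of the three sign patterns is realized.
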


A similar necessity argument can be found in \cite{yu2024mathematical}, and the existence of each admissible structure follows from Theorem~\ref{E7}.

The case with constant initial data, i.e., $\vU_L = \vU_R$, is of particular interest. It has been applied to study wave generation due to instant condensation, see \cite{cheng2010condensation}. In this special situation, the monotonicity criterion determines the wave types explicitly. The following conclusion has been proven in \cite{yu2022riemann}.

\begin{Corollary}
	For constant initial data $\vU_L = \vU_R$, it always holds that $\varepsilon_1 \geq 0$, $\varepsilon_2 \leq 0$, and $\varepsilon_4 \geq 0$. Consequently, the 1-wave to the left of the boundary and the 3-wave to the right of the boundary must be shock waves, whereas the 1-wave to the right of the boundary must be a rarefaction wave.
\end{Corollary}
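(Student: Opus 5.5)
Consider the constant data $\vU_L=\vU_R=:\vU_0$, with $u_0>0$. The Corollary asserts sign conditions on $\eps_1,\eps_2,\eps_4$. We first reduce these to monotonicity statements along the Lax curves. Then we compare the pressure and velocity jumps that the admissible coupling map \eqref{E12} forces with the jumps each wave type can produce. Throughout we use the structures of Theorem~\ref{E9} and the admissible branch $\psi$ of the coupling map, for which $p_+<p_-$ and $u_+>u_-$ (the heat-addition effect). We also use the standard fact that along $L_1^+$ the pressure increases and the velocity decreases exactly when $\eps>0$ (shock). Symmetrically, along $L_3^-$ issued from the right state (backward direction) the pressure increases and the velocity increases exactly for a shock, i.e.\ $\eps_4\ge0$. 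The $2$-wave preserves $u$ and $p$.

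\textbf{Step 1: the structure.} If $\vU_0$ is supersonic, Structure~3 would give $\vU_-=\vU_0$. The right side would then have to connect $\psi_2$-data back to $\vU_0$. In the supersonic case the reasoning is the same and is handled case by case, but the main case is subsonic $\vU_0$, where Structures~1 and 2 apply. Write $p_-=p_0+\Delta$ with $\Delta$ determined by $\eps_1$. In Structure~1 or 2 we have the chain
\[
\vU_0\xrightarrow{1\text{-wave}}\vU_-\xrightarrow{\text{interface}}\vU_+\xrightarrow{(1)\text{-wave}}\cdot\xrightarrow{2}\cdot\xrightarrow{3}\vU_0 .
\]

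\textbf{Step 2: sign of $\eps_1$.} Argue by contradiction. Suppose $\eps_1<0$, so the left wave is a 1-rarefaction. Then $p_-<p_0$ and $u_->u_0$, and the Riemann invariant $u+2c/(\gamma-1)$ is constant across this wave. After the interface, $p_+<p_-<p_0$ and $u_+>u_->u_0$. To return to $\vU_0$, the right-moving waves (2-contact and 3-wave) together with any right 1-wave must raise $p$ back to $p_0$ while lowering $u$ to $u_0$. A backward 3-wave that raises $p$ also raises $u$ when read from $\vU_0$ outward. A backward 1-wave on the right with $\lambda_1(\vU_+)\ge0$, i.e.\ Structure~2 with $\eps_2\le0$ by hypothesis, moves $u,p$ the opposite way. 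I will express everything in the $(u,p)$ plane: the right state $\vU_+$ must lie on the composite backward curve from $\vU_0$, which is monotone increasing in the $(p,u)$ relation. The left forward curve is decreasing. A rarefaction would place $(u_-,p_-)$ in the quadrant $u>u_0$, $p<p_0$, and applying \eqref{E12} moves it further into that quadrant. This contradicts that the backward curve from $\vU_0$ lies in $\{u>u_0,p>p_0\}\cup\{u<u_0,p<p_0\}$. Hence $\eps_1\ge0$.

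\textbf{Step 3: signs of $\eps_2,\eps_4$.} The condition $\eps_2\le0$ is built into Structures~2 and 3. In Structure~1 there is no right 1-wave, so that case is trivial. Given $\eps_1\ge0$, we have $p_-\ge p_0$ and $u_-\le u_0$, and I must show $\vU_+$ connects to $\vU_0$ through a 3-shock. Using the same quadrant argument, the point $(u_+,p_+)$ must lie on the monotone backward curve through $(u_0,p_0)$. The 3-wave direction is decided by whether the intermediate state after the right 1-rarefaction has $p\gtrless p_0$. Since the 1-rarefaction lowers $p$ but raises $u$, the accumulated pressure deficit is $p_0-p_{\rm int}$. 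Combining this with the Riemann invariant identity across the 1-rarefaction and the velocity gain $u_+/u_->1$ should force $p_{\rm int}>p_0$, which yields $\eps_4\ge0$.

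\textbf{Main obstacle.} The hardest part is this last quantitative comparison. The interface lowers $p$ but raises $u$, so the qualitative quadrant argument alone does not fix the sign of the 3-wave. I expect to need a genuine inequality comparing $h_1,h_2$ of \eqref{E12} with the Rayleigh-line slope $\Delta p=-\rho u\,\Delta u$, which is exactly the coupling momentum relation, against the slopes of the Lax curves. The most promising route is to use the momentum coupling $p+\rho u^2$ continuous, so that the interface moves along a line of slope $-\rho u$ in the $(u,p)$ plane. For subsonic states this line is less steep than the acoustic impedance $\rho c$ of the Lax curves. That should place $(u_+,p_+)$ on the side of the 3-curve corresponding to a shock.
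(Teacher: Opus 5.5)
The paper does not prove this corollary itself; it defers to \cite{yu2022riemann}. Your proposal therefore has to stand on its own, and as written it does not. The decisive claim $\eps_4\ge0$ is left as an expectation, and the mechanism you point to is not the one that closes it.

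Your Step~2 quadrant argument for $\eps_1\ge0$ is fine in Structures~1 and~2. Strictly, once a right 1-rarefaction is present the reachable right states no longer lie in $\{u>u_0,p>p_0\}\cup\{u<u_0,p<p_0\}$. They do still avoid the open quadrant $\{u>u_0,\,p<p_0\}$, which is all you use.

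What is missing are two quantitative facts.

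\emph{Left shock versus interface.} Write the left 1-shock as $p_--p_0=j(u_0-u_-)$ with $j=\rho_-(u_--\sigma)$. The requirement $\sigma<0$ from $\vU_-\in\cV_L(\vU_L)$ gives $j>\rho_-u_-=m$, the interface mass flux. The coupling momentum relation is $p_--p_+=m(u_+-u_-)$. This already settles Structure~1: if $p_+<p_0$ then $u_+<u_0$, and
\[
m(u_+-u_-)=p_--p_+>p_--p_0=j(u_0-u_-)>m(u_0-u_-)>m(u_+-u_-).
\]

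\emph{Right rarefaction in Structure~2.} Here the same chain only gives $p_+>p_0$. You need $p_*\ge p_0$ for the state $(u_*,p_*)$ behind the right 1-rarefaction, and that requires the \emph{reverse} comparison along the rarefaction. On the isentrope through $\vU_+$, $\rho c\propto p^{(\gamma+1)/(2\gamma)}$ is increasing in $p$. Since $M_+=1$, this gives $\rho c\le\rho_+c_+=\rho_+u_+=m$ for $p\le p_+$, hence $u_*-u_+\ge(p_+-p_*)/m$. Then $p_*<p_0$ leads to $m(u_*-u_-)\ge p_--p_*>j(u_0-u_-)>m(u_0-u_-)>m(u_*-u_-)$, a contradiction. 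Note that this uses sonicity of $\vU_+$ and the sign of the left shock speed, not subsonicity of $\vU_0$ as your heuristic $\rho u<\rho c$ suggests.

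The supersonic case is not ``the same reasoning''. On the admissible supersonic branch $\psi_2$ the interface jumps reverse: $M_+<M_-$, $p_+>p_-$, $u_+<u_-$. The inequalities in \eqref{E12} hold only on the subsonic branch, so your standing assumption $p_+<p_-$, $u_+>u_-$ fails there.

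Moreover, $\eps_2\le0$ is \emph{not} built into Structure~3. Theorem~\ref{E9} only requires $\lambda_1(\vU_+)\ge0$, and in Case~(iv) of the proof of Theorem~\ref{E7} a Structure~3 solution has a right 1-shock with positive speed. For constant data you must prove it:
\begin{itemize}
\item With $\vU_-=\vU_0$, the facts $p_+>p_0$ and $u_+<u_0$ force the 1-wave issuing from $\vU_+$ to be a rarefaction. At $p=p_+$ the backward 3-curve from $\vU_0$ already has $u>u_0>u_+$.
\item $p_*>p_0$ then follows from the same rarefaction estimate, now using $M_+\ge1$, combined with $p_+-p_0=m(u_0-u_+)$.
\end{itemize}

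Finally, supersonic $\vU_0$ does not rule out Structures~1 and~2: a left 1-shock with negative speed can make $\vU_-$ subsonic. This is another reason the left-wave estimate must rest on $\sigma<0$ rather than on $M_0<1$.
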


\section{Local Existence}\label{sec5}

\begin{Theorem}[\bf Local existence of admissible Riemann solutions]\label{E7}
	Let $\vU_L$ and $\vU_R$ be initial Riemann data satisfying condition~\eqref{E6}, and assume that the corresponding classical Riemann problem admits intermediate states with strictly positive velocity. Then, for all sufficiently small $k>0$, there exists an admissible solution to the coupled Riemann problem.
\end{Theorem}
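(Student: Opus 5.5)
The argument is a perturbation from $k=0$, where the coupled problem is the classical Riemann problem. By Proposition~\ref{P3}, condition~\eqref{E6} gives a unique non-vacuum classical solution. We denote its intermediate states by $\vU_L^*$ (between the 1- and 2-waves) and $\vU_R^*$ (between the 2- and 3-waves), with $u^*>0$ by assumption. The classical solution is a concatenation of simple waves with increasing speeds. We first locate $x=0$ in this fan and take the classical trace $\vU_0:=\vV(0;\vU_L,\vU_R)$ as the common boundary state at $k=0$. Since $u>0$ in the intermediate states, the origin sits either inside or to the right of the 1-wave. This gives three generic cases, matching Structures 1--3 of Theorem~\ref{E9}:
\begin{itemize}
\item the 1-wave lies entirely to the left ($\lambda_1(\vU_L^*)<0$ or a 1-shock with negative speed), so $\vU_0=\vU_L^*$ is subsonic (Structure 1);
\item the origin lies inside a 1-rarefaction, so $\vU_0$ is sonic (Structure 2);
\item the 1-wave lies entirely to the right, so $\vU_0=\vU_L$ is supersonic (Structure 3).
\end{itemize}
Boundary cases such as a stationary 1-shock are handled by continuity or by assigning them to a neighbouring case.

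\emph{Structure 1 (subsonic).} We look for $\vU_-=L_1^+(\eps_1;\vU_L)$ and $\vU_+=L_2^-(\eps_3;L_3^-(\eps_4;\vU_R))$ with $\Psi_k(\vU_-,\vU_+)=0$. Using the admissible branch $\psi$ and \eqref{E14}, this becomes three scalar equations $G(\eps_1,\eps_3,\eps_4;k)=0$, namely
\[
(\rho_+,u_+,p_+)=\bigl(\rho_-\,\textstyle\frac{u_-}{u_+},\; u_-h_2(M_-,k),\; p_-h_1(M_-,k)\bigr).
\]
At $k=0$ we have $h_1=h_2=1$, so the classical parameters solve $G=0$. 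The Jacobian in $(\eps_1,\eps_3,\eps_4)$ at $k=0$ is the standard Lax wave-curve determinant. It is nonzero because the three wave-curve tangents $r_1(\vU_L^*)$, $r_2$, $r_3(\vU_R^*)$ are linearly independent; this is the same computation that gives uniqueness in Proposition~\ref{P3}. The implicit function theorem then gives a $C^1$ family of solutions for small $k$. Here we need $\psi$ to be smooth near $(M_0,0)$ with $M_0<1$. This holds because $M_1^*(k)\to1$ as $k\to0$ and $\psi_1$ is smooth for $M<M_1^*$ (Proposition~\ref{P2}). The strict inequalities $\lambda_1(\vU_\pm)<0$, the sign of the 1-shock speed, and $u_\pm>0$ are open conditions, so they persist, and $\vU_\pm\in\cV_{L/R}$ follows. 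The monotonicity criterion holds by the choice of branch.

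\emph{Structure 3 (supersonic).} Here $\vU_-=\vU_L$ and $\vU_+=\psi_2$-image of $\vU_L$, which is continuous in $k$ and tends to $\vU_L$; this uses $M_L>1>M_2^*(k)^{-1}$-type closeness, since $M_2^*(k)\to1$. Proposition~\ref{P3} applied to $(\vU_+,\vU_R)$ gives a classical solution whose waves are close to those at $k=0$. Because the whole fan has strictly positive speeds in the generic case, $\vU_+\in\cV_R(\vU_R)$ and $\lambda_1(\vU_+)>0$ for small $k$.

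\emph{Structure 2 (sonic, the main obstacle).} The origin lies in a transonic 1-rarefaction, and $\psi$ is not smooth at $M=1$ since $M_1^*(k)<1<M_2^*(k)$. We therefore cannot use the implicit function theorem at the sonic point. We would instead impose choking: take $\vU_-=L_1^+(\eps_1;\vU_L)$ with $M_-=M_1^*(k)$, so that $M_+=1$ and $\lambda_1(\vU_+)=0$. The left part is a rarefaction fan from $\vU_L$ stopped at Mach $M_1^*(k)<1$, which is admissible since the speeds are negative there. On the right, $\vU_+$ is sonic, and $\vU_+=L_1^-(\eps_2;L_2^-(\eps_3;L_3^-(\eps_4;\vU_R)))$ with $\eps_2\le0$, i.e.\ a 1-rarefaction starting at speed $0$. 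The unknowns are $(\eps_1,\eps_2,\eps_3,\eps_4)$ subject to four equations: three coupling relations and $M_-=M_1^*(k)$. At $k=0$, $\eps_1$ and $\eps_2$ split the classical 1-rarefaction at its sonic point. We would again apply the implicit function theorem, now reparametrizing by Mach number along rarefaction curves, and check nondegeneracy of the Jacobian. The delicate point is the smoothness of $M_1^*(k)$ near $k=0$, which behaves like $1-O(\sqrt{k})$. We handle it by using $s=\sqrt{k}$ as the parameter, or by a direct continuity and intermediate-value argument on $\eps_2\mapsto$ mismatch. The sign $\eps_2\le0$ follows because the 1-rarefaction on the right must accelerate the flow from sonic to the classical $\vU_L^*$.
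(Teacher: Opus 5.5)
Your three generic cases coincide with the paper's cases (i)--(iii), and the arguments there are essentially sound, apart from the small points at the end. The gap is the sentence that dismisses the boundary configurations. The theorem covers every datum satisfying \eqref{E6} with positive intermediate velocity. This includes (a) a stationary 1-shock, and (b) a 1-rarefaction ending exactly at the sonic point, $\lambda_1(\vU_L)\le 0=\lambda_1(\vU_L^*)$, which includes a sonic $\vU_L=\vU_L^*$. Neither can be handled by continuity, because the inequality that makes a structure admissible holds with equality at $k=0$.

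In (a), running your Structure~1 argument gives $\vU_-(k)\to\vU_L^*$ with a left 1-shock speed $\sigma(k)\to 0$. The Structure~3 candidate has a right 1-shock whose speed also tends to $0$. Continuity fixes neither sign, and the wrong sign gives $\vU_-\notin\cV_L(\vU_L)$, resp.\ $\vU_+\notin\cV_R(\vU_R)$. Approximating by generic data does not help either: the range of $k$ on which your open conditions persist shrinks to zero at the boundary. In (b), your choked construction needs $\lambda_1(\vU_L^*)>0$ strictly to conclude $\eps_2\le 0$. With equality, the right 1-wave issued from the sonic state $\vU_+$ may be a shock of negative speed. Structure~1 cannot be reached by the smooth implicit function theorem either, because $\psi_1$ is not smooth at $(M,k)=(1,0)$.

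What is missing is an argument that decides, for $k>0$, on which side of the threshold the solution falls. The paper supplies it by gluing. It builds a piecewise function, $T_4$ resp.\ $T_5$. On one side this is the Structure~1 mismatch $u_+-f_R(p_+;\vU_R)$ along the heat images of the 1-curve from $\vU_L$; here $u=f_R(p;\vU_R)$ is the increasing 3-wave curve through $\vU_R$. On the other side it is the same mismatch along the 1-curve issued from the supersonic heat image of $\vU_L$, resp.\ from the choked state with $M_-=M_1^*(k)$, $M_+=1$. Continuity at the junction comes from the fact that a normal shock and the heat jump commute on the Rayleigh line (Proposition~\ref{P2}(iv),(vi)), resp.\ from choking. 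Both one-sided derivatives are positive at $k=0$, so a Lipschitz implicit function theorem yields a root. The side on which the root lies selects Structure~1 or~3, resp.\ 1 or~2. In (a) one must also exclude the junction itself ($\sigma=0$), which follows from \eqref{E12} and the monotonicity of $f_R$.

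For (a) there is a shortcut within your own framework. Let $T_1(M,k)=u_+-f_R(p_+;\vU_R)$ along the Structure~1 family, and let $M_L^*$ be the Mach number of $\vU_L^*$. Then \eqref{E12} and $u_L^*=f_R(p_L^*;\vU_R)$ give $T_1(M_L^*,k)=u_L^*h_2-f_R(p_L^*h_1;\vU_R)>0$. Since $T_1$ is increasing in $M$, the root satisfies $M(k)<M_L^*$. So the 1-shock is stronger than the stationary one and has $\sigma<0$.

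Smaller points:
\begin{itemize}
\item Your Structure~2 needs no $4\times 4$ Jacobian. $M_-=M_1^*(k)$ fixes $\vU_-$ explicitly, the coupling gives $\vU_+$, and Proposition~\ref{P3} solves the right problem.
\item When $\lambda_1(\vU_L)=0$, the left wave in Structure~2 is a weak negative-speed shock, not a rarefaction.
\item The Structure~1 Jacobian is nonzero because $u$ decreases along the 1-curve and increases along the 3-curve in the $(p,u)$-plane. It is not a matter of $r_1(\vU_L^*)$ and $r_3(\vU_R^*)$ being independent: tangents to Hugoniot loci far from their base points are not eigenvectors.
\end{itemize}
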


\begin{proof}
	
	Let $\vV = \vV\!\left( \tfrac{x}{t}; \vU_L, \vU_R \right)$ be the classical Riemann solution with left and right intermediate states $\vV_{*L}$ and $\vV_{*R}$. 
	Along the $t$-axis, the state of $\vV$ falls into one of the following cases:
	\begin{itemize}
		\item[(i)] $\vV(0;\vU_L,\vU_R)=\vV_{*L}$ with $\lambda_1(\vV_{*L})<0$;
		\item[(ii)] the 1-wave is a rarefaction and the $t$-axis lies inside it, i.e.\ $\lambda_1(\vU_{L})\leq 0<\lambda_1(\vV_{*L})$;
		\item[(iii)] $\vV(0;\vU_L,\vU_R)=\vU_L$ with $\lambda_1(\vU_L)>0$;
		\item[(iv)] a stationary 1-shock;
		\item[(v)] $\vV(0;\vU_L,\vU_R)=\vV_{*L}$ with $\lambda_1(\vV_{L})\leq 0,\ \lambda_1(\vV_{*L})=0$.
	\end{itemize}
	
	The proof proceeds by first identifying the boundary states $(\vU_-,\vU_+)$ and then verifying that 
	\[
	\vU_- \in \mathcal{V}_L(\vU_L), \qquad \vU_+ \in \mathcal{V}_R(\vU_R).
	\]
	In Cases (ii)–(iii), $\vU_-$ and $\vU_+$ can be determined directly.  
	In Cases (i),(iv) and (v), we apply the implicit function theorem (for Lipschitz continuous functions).  
	We now treat each case separately to establish local existence of admissible Riemann solutions.
	
	\textbf{Case (i).}
	We establish the existence of an admissible Riemann solution of \textit{Structure 1} for small $k>0$.
	Since the coupling condition is expressed in terms of the Mach number, it is convenient to represent the 1-wave using it.
	Consider the 1-wave connecting the left state $\vU_L$ to the intermediate state $\vU_-$. We have
	\begin{equation}
		u_-(p_-;\vU_L)=
		\begin{cases}
			u_L-\dfrac{p_--p_L}{\rho_L}\left( \dfrac{2}{(\gamma+1)p_-+(\gamma-1)p_L} \right)^{\tfrac{1}{2}}, & p_-\geq p_L, \\[1ex]
			u_L-\dfrac{2c_L}{\gamma-1}\left[ \left( \tfrac{p_-}{p_L}\right) ^{\tfrac{\gamma-1}{2\gamma}}-1 \right], & p_-< p_L ,
		\end{cases}
	\end{equation}
	and
	\begin{equation}
		c_-(p_-;\vU_L)=
		\begin{cases}
			c_L\sqrt{\tfrac{p_-}{p_L}}\sqrt{\dfrac{(\gamma-1)p_-+(\gamma+1)p_L}{(\gamma-1)p_L+(\gamma+1)p_-}}, & p_-\geq p_L, \\[1ex]
			c_L\left( \tfrac{p_-}{p_L} \right) ^{\tfrac{\gamma-1}{2\gamma}}, & p_-< p_L .
		\end{cases}
	\end{equation}
	The admissible domain is specified to ensure $u_->0$:
	\begin{equation}
		p_-\in 
		\begin{cases}
			\left(0,\; p_L\Big(1+\tfrac{(\gamma-1)u_L}{2c_L}\Big)^{\tfrac{2\gamma}{\gamma-1}} \right), & u_L\leq 0, \\[1ex]
			\left(0,\; p_L+\tfrac{1}{4}\big((\gamma+1)p_L^2u_L^2+p_Lu_L\sqrt{(\gamma+1)^2p_L^2u_L^2+16\gamma p_L}\big)\right), & u_L>0 .
		\end{cases}
	\end{equation}
	It is straightforward to check that $u_-(p_-;\vU_L)$ and $c_-(p_-;\vU_L)$ are differentiable in $p_-$, with $(u_-)'<0$ and $(c_-)'>0$, cf.~\cite{godlewski2013numerical}.  
	The Mach number is
	\[
	M_-(p_-;\vU_L)=\frac{u_-(p_-;\vU_L)}{c_-(p_-;\vU_L)}, \qquad (M_-)'<0,
	\]
	whose range is $(0,\infty)$.  
	Hence, there exists an inverse $g_1:(0,\infty)\to \R_+$ with
	\[
	p_-=g_1(M_-;\vU_L),\qquad g_1' = \frac{1}{(M_-)'}<0.
	\]
	The velocity can then be written as $u_-=g_2(M_-;\vU_L)$, where
	\[
	g_2'=\frac{du_-}{dp_-}\frac{dp_-}{dM_-}>0.
	\]
	
	For the boundary states $(\vU_-,\vU_+)$, we extend their relations to $k< 0$. For brevity, we retain the same notation:
	\begin{align*}
		&\psi_1(M,k)\in C^1\big((0,M_1^*(k)]\times \R_+ \cup (0,1]\times \R_-;\R_+\big),
		&& \psi_1(M,k)=\sqrt{\tfrac{1-I}{1+\gamma I}},\\
		&h_1(M,k)\in C^1\big((0,M_1^*(k)]\times \R_+ \cup (0,1]\times \R_-;\R_+\big),
		&& h_1(M,k)=\frac{\gamma M^2+1}{\gamma (\psi_1(M,k))^2+1}, \\
		&h_2(M,k)\in C^1\big((0,M_1^*(k)]\times \R_+ \cup (0,1]\times \R_-;\R_+\big),
		&& h_2(M,k)=\frac{(\psi_1(M,k))^2(\gamma M^2+1)}{M^2(\gamma (\psi_1(M,k))^2+1)} .
	\end{align*}
	They are well-defined, and satisfy
	\[
	\psi_1(M;0)=M,\quad h_1(M;0)=h_2(M;0)=1,\quad 
	\frac{d\psi_1(M;0)}{dM}=1,\ \frac{dh_1(M;0)}{dM}=\frac{dh_2(M;0)}{dM}=0.
	\]
	The relations for the states $\vU_-$ and $\vU_+$ are
	\[
	M_+=\psi_1(M_-,k),\qquad p_+=p_-\, h_1(M_-,k),\qquad u_+=u_-\, h_2(M_-,k).
	\]
	
	We now combine all the waves in the Riemann solution of \textit{Structure~1}.  
	Define
	\begin{equation*}
		\begin{aligned}
			T_1(M,k) &\in C^1\big(\mathcal{O}_1(M_{*L})\times \R;\R_+\big),\\
			T_1(M,k) &:= g_2(M;\vU_L)h_2(M,k)-f_R\big(g_1(M;\vU_L)h_1(M,k);\vU_R\big),
		\end{aligned}
	\end{equation*}
	where $\mathcal{O}_1(M_{*L})$ is a neighbourhood of $M_{*L}$, and $M_{*L}$ is the Mach number of $\vV_{*L}$.
	By Proposition~\ref{P2} (i) and (ii), $T_1$ is well-defined.
	From Proposition~\ref{P4}, $M_{*L}$ is a root of $T_1$ when $k=0$, and
	\[
	\frac{\partial T_1(M,0)}{\partial M}=g_2'(M;\vU_L)-f_R'(g_1(M;\vU_L);\vU_R)g_1'(M;\vU_L)>0.
	\]
	Hence, by the implicit function theorem, there exist neighbourhoods $\mathcal{O}(M_{*L})\subset \mathcal{O}_1(M_{*L})$, $\mathcal{O}(0)\subset \R$, and a $C^1$ map $M:\mathcal{O}(0)\to \mathcal{O}(M_{*L})$ such that
	\[
	T_1(M(k),k)=0, \qquad \forall k\in \mathcal{O}(0).
	\]
	For any $k\in \mathcal{O}(0)\cap \R_+$, an admissible Riemann solution of \textit{Structure 1} is constructed as follows.  
	We choose $\vU_-\in L_1^+(\eps,\vU_L)$ with $M_-=M(k)\in \mathcal{O}(M_{*L})$ and the corresponding $\vU_+$. Then
	\[
		T_1(M_-,k)=u_+-f_R(p_+;\vU_R)=0,
	\]
	which implies that there exist $\eps_3,\eps_4$ such that
	\[
	\vU_+=L_2^+(\eps_3;\,L_3^+(\eps_4;\vU_R)).
	\]
	Since $u_+>0$, it follows that $\vU_+\in \mathcal{V}_R(\vU_R)$.  
	Moreover, the 1-wave connecting $\vU_L$ to $\vV_{*L}$ has negative speed (by the assumption of Case~(i)). By shrinking $\mathcal{O}(M_{*L})$ if necessary, the distance $\|\vU_- - \vV_{*L}\|$ can be made arbitrarily small, ensuring $\vU_-\in \mathcal{V}_L(\vU_L)$.
	
	\textbf{Case (ii).}  
	For any suffciently small $k>0$, an admissible Riemann solution of \textit{Structure 2} is constructed as follows.  
	We choose $\vU_-=L_1^+(\eps_1;\vU_L)$ such that $M_-=M_1^*<1$.  
	If $\lambda_1(\vU_L)=0$, then the wave connecting $\vU_L$ and $\vU_-$ is a non-degenerate shock with speed
	\[
	\sigma < \lambda_1(\vU_L)=0.
	\]
	If $\lambda_1(\vU_L)<0$, then by Proposition~\ref{P2} (ii) it is a rarefaction wave for sufficiently small $k>0$.  
	In both cases we obtain $\vU_-\in \mathcal{V}_L(\vU_L)$.
	
	The corresponding right boundary state $\vU_+$ satisfies $M_+=1$. Moreover,
	\begin{align*}
		\big\| \vU_+ - \vV(0;\vU_L,\vU_R) \big\|
		&\leq \|\vU_+-\vU_-\| + \|\vU_- - \vV(0;\vU_L,\vU_R)\| \\
		&\aleq \|\vU_+-\vU_-\| + |M_1^*(k)-1| \\
		&\aleq k.
	\end{align*}
	Hence, by Proposition~\ref{P3}, the classical RP solution $\vV(0;\vU_+,\vU_R)$ exists and contains a non-degenerate rarefaction under the assumption $\lambda_1(\vV_{*L})>0$.  
	Thus, $\vU_+\in \mathcal{V}_R(\vU_R)$.
	
	\textbf{Case (iii).}  
	For any suffciently small $k>0$, an admissible Riemann solution of \textit{Structure 3} is constructed as follows.  
	Here the initial data satisfies $M_L>1$. By Proposition~\ref{P2}(ii), we have $M_L > M_2^*$ for sufficiently small $k>0$.  
	Thus we can take $\vU_-=\vU_L$ and determine the corresponding $\vU_+$.  
	Since 
	\[
	\|\vU_L-\vU_+\| = \|\vU_--\vU_+\| \aleq k, 
	\qquad 
	u_L+\frac{2c_L}{\gamma-1}>u_R-\frac{2c_R}{\gamma-1},
	\]
	it follows that
	\[
	u_+ + \frac{2c_+}{\gamma-1} > u_R - \frac{2c_R}{\gamma-1}
	\]
	for sufficiently small $k$.  
	By Proposition~\ref{P3}, the classical RP solution $\vV(x/t;\vU_+,\vU_R)$ exists and is unique.  
	Moreover, the intermediate states of $\vV(x/t;\vU_L,\vU_R)$ and $\vV(x/t;\vU_+,\vU_R)$ differ only by $O(k)$.  
	Since $M_{*L}>1$, we conclude that $\vU_+\in \mathcal{V}_R(\vU_R)$.  
	
	\textbf{Case (iv).} 
	This case may correspond to two possible structures (\textit{Structure 1} or \textit{Structure 3}). We therefore introduce the piecewise function
	\begin{align*}
		&T_4(M,k) := \\
		&\begin{cases}
			g_2 (\psi^{-1}(M,k);\vU_L)\, h_2 (\psi^{-1}(M,k),k)
			-f_R\left( g_1 (\psi^{-1}(M,k);\vU_L)\, h_1 (\psi^{-1}(M,k),k) \right), & M\leq \psi_1(M_L,k), \\[1ex]
			g_2(M;\tvU_+)-f_R(g_1(M;\tvU_+);\vU_R), & M>\psi_1(M_L,k),
		\end{cases}
	\end{align*}
	where the state $\tvU_+$ is given by
	\[
	\widetilde{M}_+=\psi_1(M_L,k),\quad 
	\widetilde{p}_+=p_-\cdot h_1(M_L,k),\quad 
	\widetilde{u}_+=u_-\cdot h_2(M_L,k).
	\]
	$\psi^{-1}(M,k)$ is the inverse function of the function $\psi(M,k)$ with respect to $M$ for a given $k$.
	It is easy to check that $T_4(M,k)$ is Lipschitz continuous with respect to $M$:
	\begin{align*}
				\lim\limits_{M\to \psi_1(M_L,k)-}T_4(M,k)&=g_2\circ \psi(M;0)\cdot h_2\circ \psi(M;0)-f_R\left( g_1\circ \psi(M;0)\cdot h_1\circ \psi(M;0) \right) ,\\
				&= \lim\limits_{M\to \psi_1(M_L,k)+}T_4(M,k),
	\end{align*}
	and the one-sided derivatives at $M=\psi_1(M_L,k)$ satisfy
	\begin{align*}
				&\lim\limits_{M\to \psi_1(M_L,0)-}\frac{\partial T_4(M,0)}{\partial M}=g_2' (\psi^{-1}(M,k);\vU_L)-f_R'\left( g_1 (\psi^{-1}(M;0);\vU_L) \right) g_1' (\psi^{-1}(M,k);\vU_L)>0,\\
				&\lim\limits_{M\to \psi_1(M_L,0)+}\frac{\partial T_4(M,0)}{\partial M}=g_2'(M;\tvU_+)-f_R'(g_1(M;\tvU_+);\vU_R)g_1'(M;\tvU_+)>0
	\end{align*}
	Moreover, by assumption,
	\[
	T_4(M_{*L},0)=u_{*L}-f_R(p_{*L};\vU_R)=0.
	\]
	Hence, by the implicit function theorem for Lipschitz functions \cite[Chapter~7]{clarke1990optimization}, there exists a $C^1$ function $M(k)$ with $M(0)=M_{*L}$ such that $T_4(M(k),k)=0$ for small $k>0$.
	Next, we construct an admissible Riemann solution as follows.
	
	\medskip
	\noindent\emph{Structure 1.}  
	If $M(k)\leq \psi_1(M_L,k)$, choose $\vU_-\in L_1^+(\eps_1;\vU_L)$ with $M_-=\psi^{-1}(M(k))$ and corresponding $\vU_+$. The $1$-wave connecting $\vU_L$ and $\vU_-$ has speed
	\begin{align*}
				\sigma=&u_L-c_L\sqrt{\left( \frac{\gamma+1}{2\gamma}\right)  \left( \frac{{ p}^-}{p_L} \right) +\left( \frac{\gamma-1}{2 \gamma} \right) }\\
				=&u_L-c_L\sqrt{\left( \frac{\gamma+1}{2\gamma}\right)  \left( \frac{g_1(\psi^{-1}(M(k),k);\vU_L)}{p_L} \right) +\left( \frac{\gamma-1}{2 \gamma} \right) }\\
				\leq &u_L-c_L\sqrt{\left( \frac{\gamma+1}{2\gamma}\right)  \left( \frac{g_1(\psi^{-1}(\psi_1(M_L,k));\vU_L)}{p_L} \right) +\left( \frac{\gamma-1}{2 \gamma} \right) }\\
				= &u_L-c_L\sqrt{\left( \frac{\gamma+1}{2\gamma}\right)  \left( \frac{g_1(\psi_1(M_L,k);\vU_L)}{p_L} \right) +\left( \frac{\gamma-1}{2 \gamma} \right) }\qquad (\text{ by Proposition~\ref{P2} (vi)})\\
				=&0.
	\end{align*}
	We claim $\sigma\neq 0$. Otherwise $\vU_-=\vV_{*L}$ and 
	\[
	u_--f_R(p_-;\vU_R)=0.
	\]
	However, since $T_4(M(k),k)=u_+-f_R(p_+;\vU_R)=0$ and $k>0$ implies $(p_-,u_-)\neq(p_+,u_+)$, monotonicity of $f_R$ yields a contradiction. Thus $\sigma\neq 0$, proving $\vU_-\in \cV_L(\vU_L)$ and $\vU_+\in \cV_R(\vU_R)$.
	
	\medskip
	\noindent\emph{Structure 3.}  
	If $M(k)>\psi_1(M_L,k)$, set $\vU_-=\vU_L\in\cV_L(\vU_L)$. The corresponding $\vU_+$ satisfies
	\[
	T_4(M(k),k)=g_2(M(k);\vU_+)-f_R(g_1(M(k);\vU_+);\vU_R)=0,
	\]
	so that $\vV(\tfrac{x}{t};\vU_+,\vU_R)$ exists with left intermediate Mach number $M(k)$. Since $M(k)<1<M_+$, the $1$-wave in this RP is a shock wave with speed
	\begin{align*}
				\sigma=&u_+-c_+\sqrt{\left( \frac{\gamma+1}{2\gamma}\right)  \left( \frac{g_1(M(k));\vU_+}{p_+} \right) +\left( \frac{\gamma-1}{2 \gamma} \right) }\\
				> &u_+-c_+\sqrt{\left( \frac{\gamma+1}{2\gamma}\right)  \left( \frac{g_1(\psi_1(M_L,k);\vU_+)}{p_+} \right) +\left( \frac{\gamma-1}{2 \gamma} \right) }\\
				= &0\qquad \qquad \qquad \qquad (\text{ by Proposition~\ref{P2} (vi)})
	\end{align*}
	which shows $\vU_+\in\cV_R(\vU_R)$.
	
	\textbf{Case (v).}  
	This case may correspond to either \textit{Structure 1} or \textit{Structure 2}. Define
	\begin{align*}
		&T_5(M,k):=\\
		&\begin{cases}
			g_2 (\psi^{-1}(M,k);\vU_L)\, h_2 (\psi^{-1}(M,k),k)
			-f_R\!\left( g_1 (\psi^{-1}(M,k);\vU_R)\, h_1 (\psi^{-1}(M,k),k) \right), & M<1, \\[1ex]
			g_2(M;\tvU_+)-f_R(g_1(M;\tvU_+);\vU_R), & M\geq 1,
		\end{cases}
	\end{align*}
	where $\tvU_+$ is obtained as follows: choose
	\[
	\tvU_-:=L_1^+(\eps_1;\vU_L)\quad \text{with}\quad \tM_-=M_1^*(k),
	\]
	and set
	\[
	\tM_+=\psi(\tM_-,k),\qquad \tp_+=\tp_-\, h_1(\tM_-,k),\qquad \tu_+=\tu_-\, h_2(\tM_-,k).
	\]
	
	It follows that
	\[
	\lim_{M\to 1} T_5(M,k)=\tu_+-f_R(\tp_+;\vU_R)=T_5(1,k),
	\]
	so $T_5$ is continuous at $M=1$. Moreover, the one-sided derivatives at $(M,k)=(1,0)$ satisfy
	\[
	\lim_{M\to 1-}\frac{\partial T_5(M;0)}{\partial M}>0, \qquad 
	\lim_{M\to 1+}\frac{\partial T_5(M;0)}{\partial M}>0.
	\]
	Since $T_5(1,0)=0$, the implicit function theorem for Lipschitz functions ensures the existence of a $C^1$ map $M(k)$, defined for small $k>0$, such that $T_5(M(k),k)=0$.
	
	\medskip
	\noindent\emph{Structure 1.}  
	If $M(k)<1$, take $\vU_-=L_1^+(\eps;\vU_L)$ with $M_-=\psi^{-1}(M(k))$.  
	Since $M_L\leq 1$ and $M_-<1$, the $1$-wave connecting $\vU_L$ and $\vU_-$ has negative speed, so $\vU_-\in\cV_L(\vU_L)$.  
	The corresponding $\vU_+$ satisfies
	\[
	u_+-f_R(p_+;\vU_R)=0,
	\]
	hence $\vU_+\in\cV_R(\vU_R)$.
	
	\medskip
	\noindent\emph{Structure 2.}  
	If $M(k)\geq 1$, choose $(\vU_-,\vU_+)=(\tvU_-,\tvU_+)$ as defined above.  
	The $1$-wave connecting $\vU_L$ and $\vU_-$ has negative speed, so $\vU_-\in\cV_L(\vU_L)$.  
	Moreover,
	\[
	g_2(M(k);\vU_+)-f_R(g_1(M(k);\vU_+);\vU_R)=0,
	\]
	which guarantees that the classical RP $\vV(x/t;\vU_+,\vU_R)$ exists with left intermediate Mach number $M(k)$.  
	Since $M_+=1$ and $M(k)\geq 1$, this RP contains a $1$-rarefaction wave, thus $\vU_+\in\cV_R(\vU_R)$.
\end{proof}

\begin{Corollary}[\bf Uniqueness of each structure]\label{Co1}
	Assume that $k<1/(\gamma^2-1)$. Then, for any given Riemann data $(\vU_L,\vU_R)$, there exists at most one admissible solution for each structure.
\end{Corollary}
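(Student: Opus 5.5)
For each of the three structures of Theorem~\ref{E9}, I would reduce the admissible solution to a single scalar equation in one unknown (a Mach number or a pressure) and show that this equation is strictly monotone in that unknown. Strict monotonicity then allows at most one root. The hypothesis $k<1/(\gamma^2-1)$ enters only through Proposition~\ref{P4}, which supplies the monotonicity of $h_1$ and $h_2$ on the subsonic branch.

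\textbf{Structure 3.} This case is the simplest. Here $\vU_-=\vU_L$ is forced, and since $\lambda_1(\vU_-)>0$ we have $M_-\ge M_2^*$. The monotonicity criterion selects the branch $\psi=\psi_2$, so $\vU_+$ is uniquely determined by $\vU_L$. Finally $\vV(\tfrac{x}{t};\vU_+,\vU_R)$ is unique by Proposition~\ref{P3}. Hence there is at most one such solution.

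\textbf{Structure 2.} The condition $\lambda_1(\vU_+)=0$ means $M_+=1$. Because $\psi$ is strictly increasing and $\psi(M_1^*,k)=1$, the monotonicity criterion forces $M_-=M_1^*(k)$. Along $L_1^+(\cdot;\vU_L)$ the Mach number is strictly monotone ($(M_-)'<0$), so $M_-$ determines $\vU_-=g(M_1^*;\vU_L)$ uniquely, through $p_-=g_1(M_1^*;\vU_L)$ and $u_-=g_2(M_1^*;\vU_L)$. Then $\vU_+$ follows from \eqref{E12}, and the right classical Riemann problem is unique. So Structure 2 also admits at most one solution.

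\textbf{Structure 1.} This is the main step, since $M_-\in(0,M_1^*]$ is free. A solution of Structure 1 corresponds exactly to a root of
\[
T(M,k)=g_2(M;\vU_L)\,h_2(M,k)-f_R\big(g_1(M;\vU_L)\,h_1(M,k);\vU_R\big).
\]
To see this, note that $\vU_+$ has to be connected to $\vU_R$ by a 2-wave and a 3-wave only, so $u_+=f_R(p_+;\vU_R)$, where $f_R$ is the standard (strictly increasing) right wave curve. I would then show that $T$ is strictly increasing in $M$ on $(0,M_1^*)$ by treating each term separately:
\begin{itemize}
\item $g_2'>0$ and $\partial_M h_2>0$, the latter by Proposition~\ref{P4}, with both factors positive. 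Hence the first term is strictly increasing.
\item $g_1'<0$ and $\partial_M h_1<0$, again by Proposition~\ref{P4}, with positive factors. Hence $p_+=g_1h_1$ is strictly decreasing in $M$.
\item Since $f_R$ is increasing in $p$, the composition $f_R(p_+)$ is decreasing in $M$, so the subtracted term contributes positively to monotonicity.
\end{itemize}
Consequently $T(\cdot,k)$ has at most one zero. Two Structure-1 solutions would therefore share $M_-$, hence share $\vU_-$ and $\vU_+$, and by Theorem~\ref{T1} they would coincide.

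The main obstacle I expect is the careful justification that the parametrization of $\vU_-$ by $M_-$ covers both the shock and the rarefaction parts of $L_1^+$, with $g_1$ and $g_2$ monotone across $p_-=p_L$. The other delicate point is that $h_1$ and $h_2$ use the subsonic branch on exactly the range where Proposition~\ref{P4} applies, namely $M<M_1^*$. The endpoint $M=M_1^*$ can be handled by continuity, and it coincides with Structure 2 in any case.
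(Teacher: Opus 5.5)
Your proposal is correct and takes the same route as the paper. Structures~2 and~3 are fixed by the monotonicity of $\psi$ together with uniqueness of the classical Riemann problem, and Structure~1 by strict monotonicity in $M$ of $T_1(M,k)=g_2h_2-f_R(g_1h_1;\vU_R)$; your term-by-term sign argument ($g_2h_2$ increasing, $g_1h_1$ decreasing, $f_R$ increasing, via Proposition~\ref{P4}) is the paper's product-rule derivative computation written out.
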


\begin{proof}
	The uniqueness of \textit{Structure~2} and \textit{Structure~3} follows directly from the monotonicity of $\psi$ and the uniqueness of the classical Riemann solution. 
	
	For \textit{Structure~1}, recall from the proof of case~(i) in Theorem~\ref{E7} that the left boundary state satisfies
	\[
	T_1(M_-,k)=g_2(M_-;\vU_L)\,h_2(M_-,k)-f_R\!\left(g_1(M_-;\vU_L)\,h_1(M_-,k);\vU_R\right)=0.
	\]
	By Proposition~\ref{P4}, its derivative is strictly positive:
	\begin{align*}
		\frac{\partial T_1}{\partial M}(M_-,k)
		&= g_2'(M_-;\vU_L)h_2(M_-,k)+g_2(M_-;\vU_L)\,\frac{\partial h_2}{\partial M}(M_-,k) \\
		&\quad - f_R'\left(g_1(M_-;\vU_L)h_1(M_-,k);\vU_R\right)
		\big(g_1'(M_-;\vU_L)h_1(M_-,k)+g_1(M_-;\vU_L)\frac{\partial h_1}{\partial M}(M_-,k)\big) \\
		&>0,
	\end{align*}
	which implies that the Mach number $M_-$ is uniquely determined. Together with the monotonicity of $\psi$ and the uniqueness of the classical Riemann solution, this yields the uniqueness of the admissible Riemann solution.
\end{proof}

\begin{Remark}\label{R1}
	By the proof of Theorem~\ref{E7}, the admissible Riemann solutions depend continuously on the unique solution of the classical Riemann problem when $k>0$ is sufficiently small. Therefore, they are \emph{physically admissible} in the sense of~\cite{herty2019coupling}.
\end{Remark}

\section{Non-existence}\label{sec6}

\begin{Theorem}[\bf Non-existence of admissible Riemann solutions]\label{E8}
	For any fixed $k>0$, there exist infinitely many Riemann data $\vU_L$ and $\vU_R$ such that no admissible Riemann solution exists.
\end{Theorem}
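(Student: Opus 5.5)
Fix $k>0$. The plan is to take constant-type data in the "forbidden gap" of Mach numbers, $M_1^*(k)<M_L<M_2^*(k)$, and show that none of the three structures of Theorem~\ref{E9} can be realized. By Proposition~\ref{P2}(i), the admissible branch $\psi$ is only defined for $M_-\le M_1^*$ or $M_-\ge M_2^*$, and $M_1^*<1<M_2^*$. The family of data will be infinite because the condition is open in $M_L$ and invariant under scaling. Concretely, I would take $\vU_L$ with $M_L\in(1,M_2^*(k))$ and choose $\vU_R$ so that the right half-problem forces a supersonic interface state. The simplest choice is $\vU_R=\vU_L$; more generally I would use $\vU_R$ in a small neighbourhood of $\vU_L$, or $\vU_R$ lying on the forward 2-/3-wave curves from $\vU_L$.

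\textbf{Structure 3.} This requires $\vU_-=\vU_L$ with $\lambda_1(\vU_L)>0$, that is $M_L>1$. Then $M_+=\psi(M_L,k)$ must be defined, which needs $M_L\ge M_2^*$. This fails because $M_L<M_2^*$, so Structure 3 is excluded.

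\textbf{Structures 1 and 2.} Both require $\vU_-=L_1^+(\eps_1;\vU_L)$ with $\lambda_1(\vU_-)<0$ and $\vU_-\in\cV_L(\vU_L)$, so the connecting 1-wave must have negative speed. Since $M_L>1$, every 1-rarefaction from $\vU_L$ has $\lambda_1\ge\lambda_1(\vU_L)>0$ at its leading edge, so its speed is not negative. The 1-wave must therefore be a shock with negative speed. From the shock-speed formula used in the proof of Theorem~\ref{E11}, this means $p_-/p_L$ exceeds the stationary-shock value. Along the shock branch, $M_-$ is decreasing in $p_-$ (as in Case (i) of Theorem~\ref{E7}). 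The stationary shock takes $M_L$ to the normal-shock Mach number $\hat M=\sqrt{((\gamma-1)M_L^2+2)/(2\gamma M_L^2+1-\gamma)}$. Hence $M_-<\hat M$. Since $M_L<M_2^*$ and the normal-shock map is decreasing, Proposition~\ref{P2}(v) gives $\hat M>M_1^*$. This leaves room for $M_-\le M_1^*$, so a contradiction must come from the right side.

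For the right side, Structure 1 needs $\vU_+=L_2^-(\eps_3;L_3^-(\eps_4;\vU_R))$ with $\lambda_1(\vU_+)<0$, i.e.\ $M_+<1$. Structure 2 needs $M_+=1$. Using $p_+=p_-h_1$, $u_+=u_-h_2$ together with \eqref{E12}, we have $p_+<p_-$ and $u_+>u_-$. So the pair $(p_+,u_+)$ must lie on the 2–3 wave curve $u=f_R(p;\vU_R)$, while $(p_-,u_-)$ lies on the 1-shock curve from $\vU_L$ with $M_-\le M_1^*$. With $\vU_R=\vU_L$, the 3-curve from $\vU_L$ is increasing in $p$. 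Meanwhile, the admissible 1-shock states with $M_-\le M_1^*$ have $p_-$ at least the pressure $p^\sharp$ at which $M_-=M_1^*$. I would then show that
\[
u_-h_2(M_-,k)<f_R(p_-h_1(M_-,k);\vU_L)\quad\text{for all }M_-\le M_1^*,
\]
so $T_1$ has no root. The function $T_1$ is monotone in $M_-$ by the computation in Corollary~\ref{Co1}, which requires $k<1/(\gamma^2-1)$. For large $k$ the same estimate needs a separate argument, or else I would restrict the family to data adapted to $k$. Given this monotonicity, it suffices to check the sign at the endpoint $M_-=M_1^*$ (where $M_+=1$, which also covers Structure 2) and at $M_-\to 0$.

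\textbf{Main obstacle.} The hard step is this endpoint sign verification: showing that at $M_-=M_1^*$ the heated state lies strictly below (or above) the 3-curve of $\vU_R$, uniformly over a family of data. My first attempt would be an explicit computation for $\vU_R=\vU_L$ with $M_L$ slightly above $1$. In that regime $p^\sharp/p_L$, $h_1$ and $h_2$ can be written in closed form in terms of $M_L$ and $k$. If this sign fails, I would instead perturb $\vU_R$, for example by lowering $u_R$ to shift the 3-curve. Openness of the strict inequalities then yields infinitely many data, and scaling invariance ($\rho\mapsto\alpha\rho$, $p\mapsto\alpha p$) gives more.
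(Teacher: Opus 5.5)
\textbf{There is a genuine gap, and it sits in the step you flagged.} For $M_L\in(1,M_2^*(k))$ with $\vU_R=\vU_L$ (or any $\vU_R$ close to $\vU_L$), an admissible solution always exists, so no endpoint sign verification can succeed.

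What you do get right: Structure~3 is excluded, because $\psi(M_L,k)$ is not real in the choking gap. Also, every $M_-\in(0,M_1^*]$ is reached from $\vU_L$ by a 1-shock of negative speed.

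The error is treating the endpoint $M_-=M_1^*$ as if Structure~2 required $T_1(M_1^*,k)=0$. Structure~2 only requires the sonic state $\vU_+$ to be joined to $\vU_R$ by a 1-rarefaction ($\eps_2\le 0$). In the $(p,u)$-plane this happens exactly when $u_+\le f_R(p_+;\vU_R)$, i.e. when $T_1(M_1^*,k)\le 0$. So the inequality you want to prove, $T_1<0$ on $(0,M_1^*]$, would rule out Structure~1 but would \emph{construct} a Structure~2 solution.

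The other sign is also unavailable. As $M_-\to 0^+$ along the 1-shock curve of $\vU_L$, you have $u_-\to 0$, $h_2\to 1+k$ and $h_1\to 1$. Hence $T_1(M_-,k)\to -f_R(p^{\max};\vU_R)$, where $p^{\max}>p_L$ is the pressure at which $u_-$ vanishes. For $\vU_R=\vU_L$ with $u_L>0$, or $\vU_R$ nearby, this limit is negative. You then have a dichotomy:
\begin{itemize}
\item If $T_1(M_1^*,k)\le 0$, Structure~2 exists; for $M_L$ near $1$ the right Riemann problem is non-vacuum.
\item If $T_1(M_1^*,k)>0$, the intermediate value theorem gives a root $M_-\in(0,M_1^*)$, i.e. a Structure~1 solution with all wave speeds of the correct sign. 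No monotonicity of $T_1$ is needed.
\end{itemize}
This is just the physics of choking: the excess heat is absorbed by an upstream-travelling 1-shock, consistent with the corollary for constant data $\vU_L=\vU_R$.

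To make the gap idea work, you would need $T_1>0$ on all of $(0,M_1^*]$. That forces $f_R(p^{\max};\vU_R)\le 0$, i.e. a strongly counter-flowing $\vU_R$ far from $\vU_L$. It also needs the monotonicity of $T_1$, which is only available for $k<1/(\gamma^2-1)$.

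The paper uses a different mechanism, resonance rather than choking. It takes $M_L>M_2^*(k)$ and sets $\vU_R=L_3^+(\eps_3;L_2^+(\eps_2;\oline\vU))$, where $\oline\vU$ is the non-admissible subsonic partner of $\vU_L$, with $\oline M=\psi_1(M_L,k)$. Then:
\begin{itemize}
\item The only Structure~1 candidate is the state behind the \emph{stationary} 1-shock from $\vU_L$, which is not in $\cV_L(\vU_L)$.
\item Structure~2 would need a weaker 1-shock, which has positive speed.
\item In Structure~3, the supersonic $\vU_+$ forms a stationary shock with $\oline\vU$, so $\vU_+\notin\cV_R(\vU_R)$.
\end{itemize}
In each case the obstruction is a zero-speed wave sitting on the interface, and your near-constant data never produce one.
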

\begin{proof}
	We construct a family of Riemann data for which no admissible solution exists.
	Choose a state $\vU_L\in \cD$ with $M_L>M_2^*(k)$, and define
	\begin{equation*}
		\vU_R=L_3^+(\eps_3;L_2^+(\eps_2;\oline \vU)),
	\end{equation*}
	where $\oline \vU$ is given by the coupling condition $\Psi(\vU_L,\oline \vU)=0$, which violates the monotonicity criterion, i.e. $\oline M=\psi_1(M_L,k)$. 
	The free parameters $\eps_2,\eps_3$ are chosen so that the states remain in $\cD$.
	
	\medskip
	\noindent{Step 1. Non-existence of Structure~1.}  
	Let $\vU_-=L_1^+(\eps;\vU_L)$ with
	\begin{equation*}
		p_-=p_L\cdot\frac{2\gamma M_L^2-\gamma +1}{\gamma+1}.
	\end{equation*}
	Since $M_L>M_2^*(k)>1$, one checks that
	\begin{equation*}
		p_->p_L, 
		\qquad 
		0<M_-=\frac{(\gamma-1)M_L^2+2}{2\gamma M_L^2-\gamma+1}<1<M_L,
	\end{equation*}
	so that the 1-wave connecting $\vU_L$ and $\vU_-$ is a shock wave with speed
	\begin{equation*}
		\sigma=u_L-c_L\sqrt{\frac{\gamma+1}{2\gamma}\cdot\frac{p_-}{p_L}+\frac{\gamma-1}{2\gamma}}=0.
	\end{equation*}
	For the corresponding right boundary state $\vU_+$, since $M_+\le 1$ and $\oline M\le 1$, it follows from Proposition~\ref{P2} (iv) that $\vU_+=\oline \vU$.  
	Moreover, $M_-$ is the unique root of $T_1(M,k)=0$, see the proof of Corollary~\ref{Co1}.  
	However, $\sigma=0$ implies $\vU_-\notin \cV_L(\vU_L)$. Hence Structure~1 is impossible.
	
	\medskip
	\noindent{Step 2. Non-existence of Structure~2.}  
	Assume otherwise. Then the left boundary state must be
	\begin{equation*}
		\vU_-=L_1^+(\eps;\vU_L), \qquad M_-=M_1^*.
	\end{equation*}
	By Proposition~\ref{P2} (v),
	\begin{equation*}
		\frac{(\gamma-1)M_L^2+2}{2\gamma M_L^2+1-\gamma}
		<\frac{(\gamma-1)(M_2^*)^2+2}{2\gamma (M_2^*)^2+1-\gamma}=M_1^*.
	\end{equation*}
	Thus the left 1-wave is a shock with speed
	\begin{align*}
		\sigma
		&=u_L-c_L\sqrt{\tfrac{\gamma+1}{2\gamma}\cdot\tfrac{g_1(M_1^*)}{p_L}+\tfrac{\gamma-1}{2\gamma}} \\
		&>u_L-c_L\sqrt{\tfrac{\gamma+1}{2\gamma}\cdot \tfrac{1}{p_L}g_1\!\left(\tfrac{(\gamma-1)M_L^2+2}{2\gamma M_L^2+1-\gamma}\right)+\tfrac{\gamma-1}{2\gamma}} \\
		&=u_L-c_L\sqrt{\tfrac{\gamma+1}{2\gamma}\cdot\tfrac{1}{p_L}\,p_L\cdot\tfrac{2\gamma M_L^2-\gamma+1}{\gamma+1}+\tfrac{\gamma-1}{2\gamma}} \\
		&=0.
	\end{align*}
	Therefore $\vU_-\notin \cV_L(\vU_L)$, and Structure~2 is impossible.
	
	\medskip
	\noindent{Step 3. Non-existence of Structure~3.}  
	Assume otherwise. Then $\vU_-=\vU_L$. Since $\oline M<1$ and $M_+>1$, Proposition~\ref{P2} (iv) implies that the pair $(\vU_+,\oline \vU)$ forms a stationary shock. With 
	\[
	\vU_R=L_3^+(\eps_3;L_2^+(\eps_2;\oline \vU)),
	\]
	the associated classical Riemann solution reads
	\[
	\vV\!\left(\tfrac{x}{t};\vU_+,\vU_R\right)=
	\begin{cases}
		\vU_+ , & x<0,\\
		\vV\!\left(\tfrac{x}{t};\oline \vU,\vU_R\right), & x>0.
	\end{cases}
	\]
	Hence $\vU_+\notin \cV_R(\vU_R)$. Thus Structure~3 is also impossible.
	
	\medskip
	Combining the above steps, none of the three admissible structures can occur. 
	Therefore, for the constructed data, no admissible Riemann solution exists.
\end{proof}

\begin{Remark}\label{R2}
	For the initial data in Theorem~\ref{E8}, there always exists an entropy solution of the form
	\begin{equation*}
		\vU(t,x)=
		\begin{cases}
			\vU_L, & x<0,\\[0.3em]
			\vV\!\left(\tfrac{x}{t}; \oline\vU, \vU_R \right), & x>0.
		\end{cases}
	\end{equation*}
	However, this solution does not satisfy the monotonicity criterion, since $\lambda_1(\vU_-)>0$ while $\lambda_1(\vU_+)<0$. 
	Note that in the special case $k=0$, the corresponding Riemann data yield the classical RP solution, which contains a stationary shock wave. 
	Although this stationary shock is physically-relevant, it does not comply with the monotonicity criterion. 
	Hence, the violation originates from the interaction between a stationary shock and the coupling interface. 
	Consequently, there exists a point in the solution such that $\lambda_1(\vU)=0$. 
	This phenomenon is often referred to as \emph{nonlinear resonance} and may give rise to rich coupling interface; see, e.g., \cite{liu1987nonlinear,isaacson1992nonlinear}.
\end{Remark}

\section{Numerical Results}\label{sec7}
We verify the coupled Rimeann solutions by means of finite volume methods.
The one-dimensonal computational domain $\left[a,b \right] (a<0,b>0)$ is partitioned into $N$ non-overlapping cells with interfaces $a=x_{\frac{1}{2}}<x_{\frac{3}{2}}<...<x_{N+\frac{1}{2}}=b$. 
Each cell is denoted by $I_j=[x_{j-\frac{1}{2}},x_{j+\frac{1}{2}}]$, with center $x_j=(x_{j-\frac{1}{2}}+x_{j+\frac{1}{2}})/2$.
We restrict to a uniform mesh, with mesh size $h=x_{j+\frac{1}{2}}-x_{j-\frac{1}{2}}$.

A natural numerical strategy for the coupled problem is to solve the equations separately on the two subdomains, while enforcing the coupling condition as a boundary condition; see, e.g., \cite{herty2019couplingof,kroner2005numerical}. 
However, the boundary states are in general two-branched. Although one branch can be selected according to the admissibility criterion, our present goal is precisely to verify both the admissibility criterion and the admissible Riemann solutions. Hence, we prefer to avoid introducing any bias stemming from a priori selection.

To this end, we employ the splitting method used in \cite{yu2023well}. Since the system is not conservative on the entire domain $\Omega$, the essential idea is to reformulate the coupling condition as a source-type term within the numerical scheme.

Let $\Delta t>0$ be a uniform time step, and define the discrete time levels by $t^n=n\Delta t$ for $n\in\mathbb{N}$. Denote by $\vU_j^n$ the numerical approximation of the cell average of the exact solution $\vU(x,t)$ over cell $I_j$ at time $t^n$, namely,
\begin{equation*}
	\vU_j^n\approx \frac{1}{h}\int_{I_j}\vU(x,t^n)\mathrm{d}x.
\end{equation*}
In particular, the initial data are discretized as
\begin{equation*}
	\vU_j^n=\frac{1}{h}\int_{I_j}\vU_0(x)\mathrm{d}x.
\end{equation*}

We evolve the numerical solution in two steps.
\begin{itemize}
	\item Step 1 (global update).\\
The flow field is advanced by a standard conservative finite volume scheme,
\begin{equation*}
		\tvU^{n+1}_j=\vU_j^n-\frac{\Delta t}{h}\left[ \int_{t^n}^{t^{n+1}}\vF(\vU(x_{j+\frac{1}{2}},t))dt
		-\int_{t^n}^{t^{n+1}}\vF(\vU(x_{j-\frac{1}{2}},t))dt
		\right] .
\end{equation*}
The flux integrals are approximated by the local Lax–Friedrichs numerical flux.
The time step $\Delta t$ is chosen according to a suitable Courant–Friedrichs–Lewy (CFL) condition to ensure stability.

\item Step 2 (interface correction).\\ 
Since the flow is discontinuous at the coupling interface, we place the interface at a cell boundary,
\begin{equation*}
	a<x_{j_0+\frac{1}{2}}=0<b,\quad 0\leq j_0\leq N.
\end{equation*}
The coupling condition is enforced by a source-type correction in upwind mode
\begin{equation*}\left\lbrace 
	\begin{aligned}
		\vU_{j}^{n+1}&=\tvU_{j}^{n+1}, \quad j\ne j_0+1\\
		\vU_{j}^{n+1}&=\tvU_{j}^{n+1}+\frac{\Delta t}{h}\vS^{n+1},, \quad j= j_0+1,
	\end{aligned}\right. 
\end{equation*}
where the source term $\vS^{,n+1}$ is defined by
\begin{equation*}
		\vS^{n+1}=\begin{pmatrix}
			0\\0\\
			(\widetilde{E}_{j_0+1}^{n+1}+\widetilde{p}_{j_0+1}^{n+1})\widetilde{u}_{j_0+1}^{n+1}-(\widetilde{E}_{j_0}^{n+1}+\widetilde{p}_{j_0}^{n+1})\widetilde{u}_{j_0}^{n+1}
		\end{pmatrix}.
\end{equation*}
\end{itemize}

\subsection{Numerical verification of the three structures (Theorem~\ref{E9})}
We consider three Riemann problem with the following intial conditions
\begin{alignat*}{4}
	\textit{Structure 1: }&\rho_L=1.0,& &u_L=0.8,& &p_L=1.0,&\\ 
	&\rho_R=1.1743,& \quad &u_R=0.5437,& \quad &p_R=0.6203.&\\
	\textit{Structure 2: }&\rho_L=0.8529,& &u_L=1.0046,& &p_L=0.9712,&\\ 
	&\rho_R=0.5028,& \quad &u_R=1.6840,& \quad &p_R=0.2318.&\\
	\textit{Structure 3: }&\rho_L=1.0,& &u_L=2.2,& &p_L=1.0,&\\ 
	&\rho_R=2.3757,& \quad &u_R=0.8766,& \quad &p_R=1.8482.&\\
\end{alignat*}
The corresponding numerical results are displayed in Figures~\ref{fig2}, \ref{fig3}, and \ref{fig4}, respectively.
The reference (exact) solutions are constructed by sequentially resolving each simple wave.
In all three test cases, the numerical solutions are found to be in excellent agreement with the exact ones, thereby validating the proposed Riemann solutions.
\begin{figure}[!h]
	\centering
	\subfigure[density]{
		\includegraphics[width=0.31\linewidth]{ 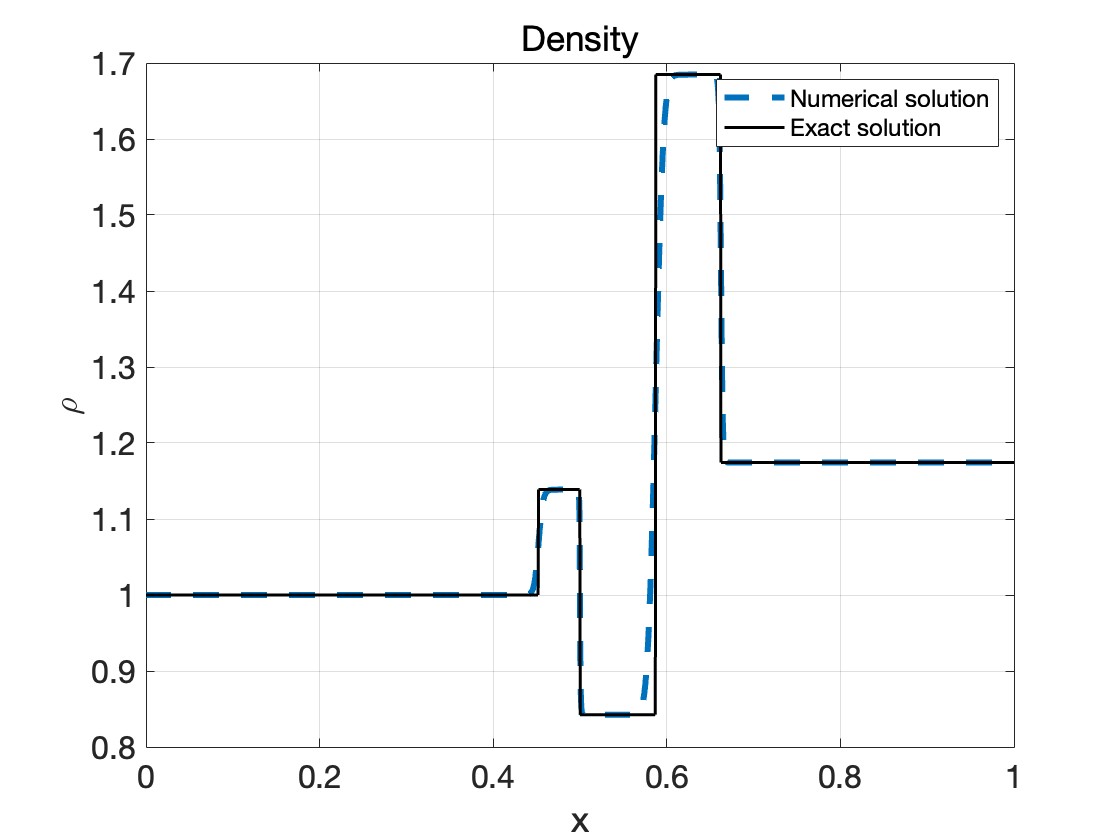}
	}
	\subfigure[velocity]{
		\includegraphics[width=0.31\linewidth]{ 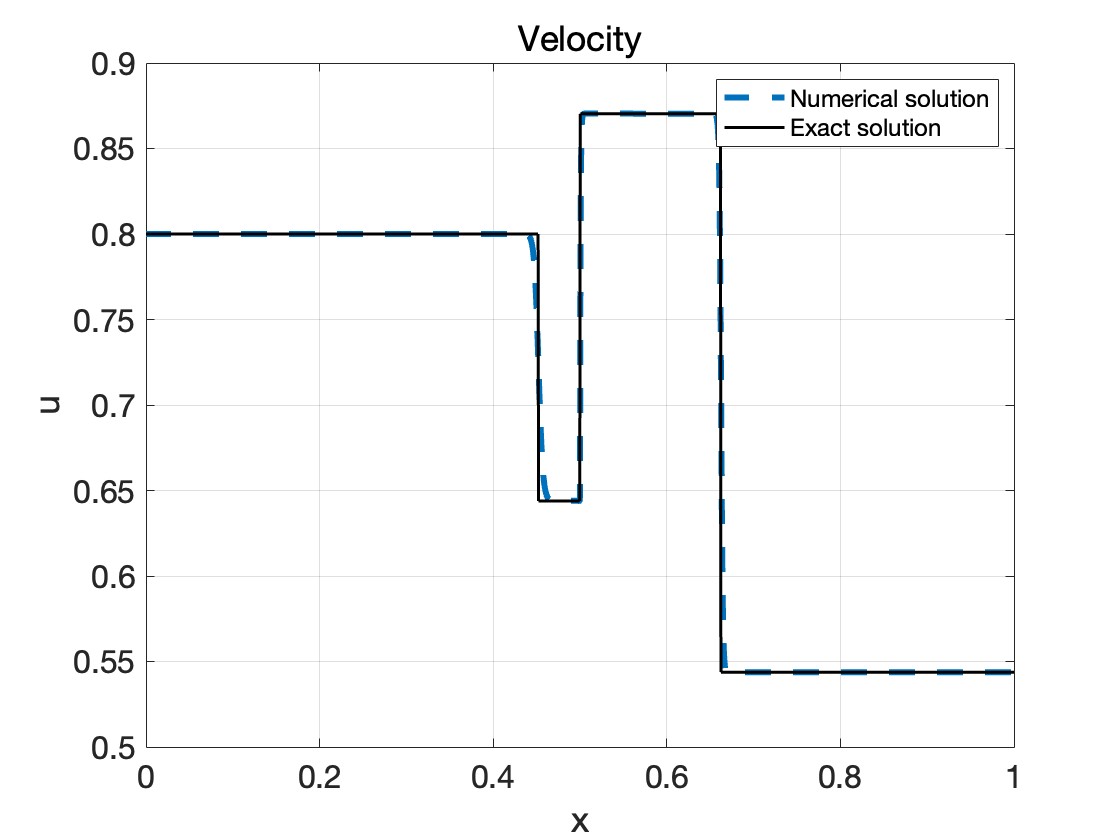}
	}
	\subfigure[pressure]{
		\includegraphics[width=0.31\linewidth]{ 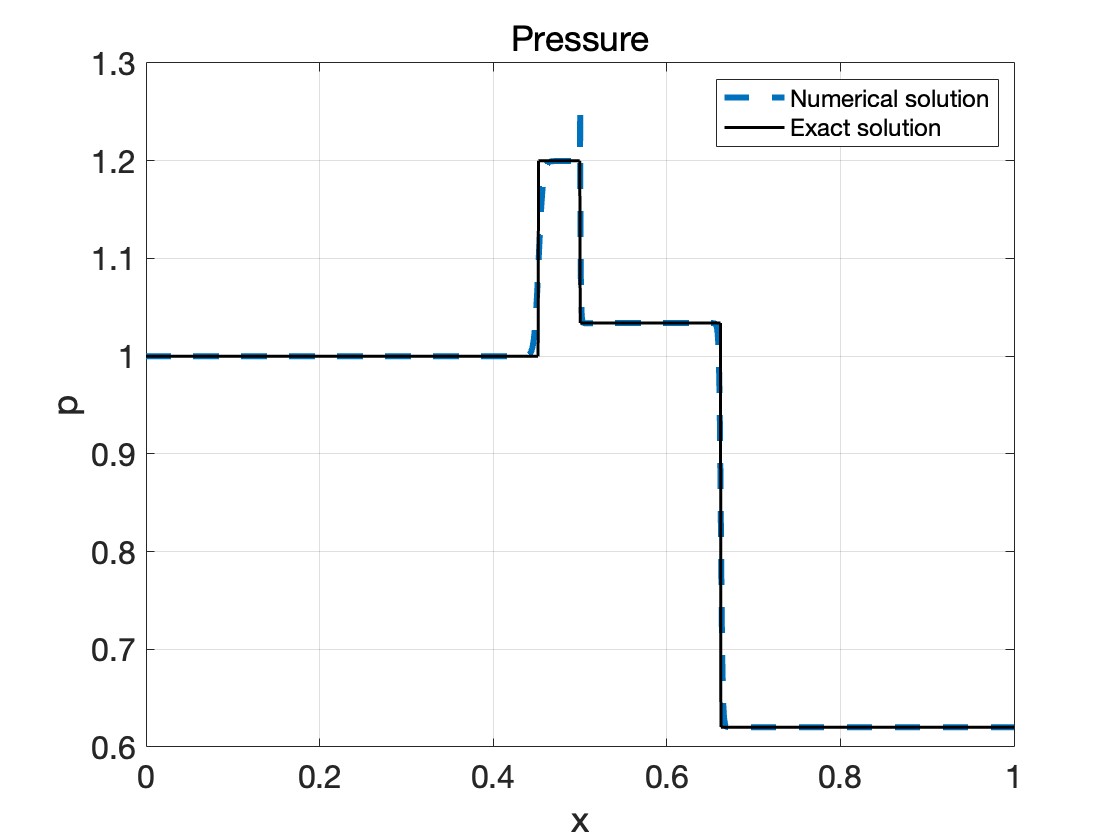}
	}
	\caption{Comparison of numerical and exact solutions for the Riemann problem of \textit{Structure 1}.}
	\label{fig2}
\end{figure}
\begin{figure}[!h]
	\centering
	\subfigure[density]{
		\includegraphics[width=0.31\linewidth]{ 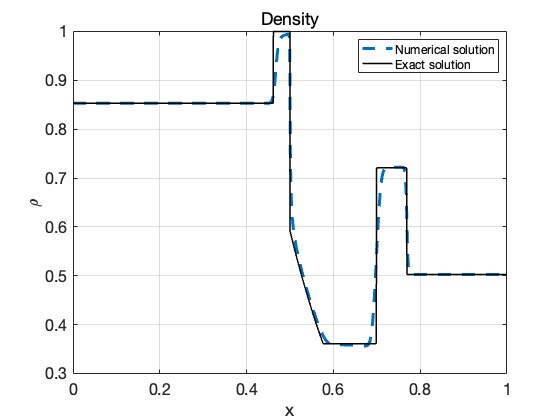}
	}
	\subfigure[velocity]{
		\includegraphics[width=0.31\linewidth]{ 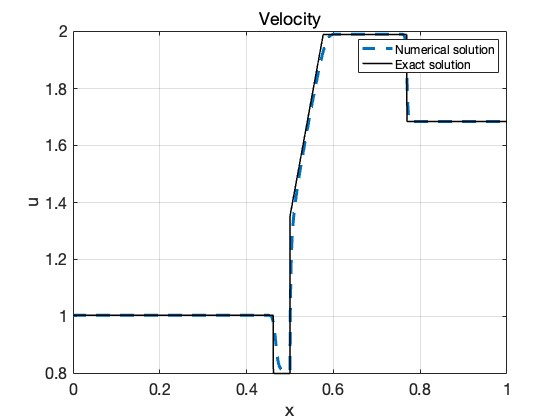}
	}
	\subfigure[pressure]{
		\includegraphics[width=0.31\linewidth]{ 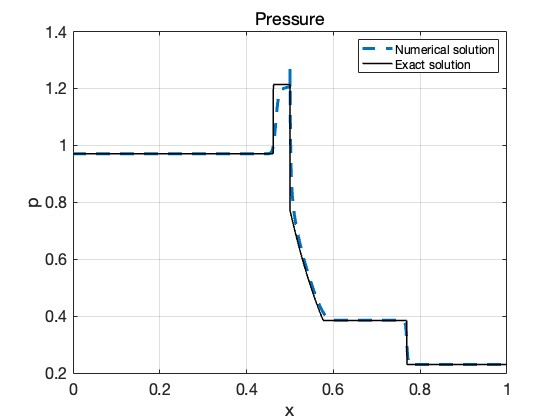}
	}
	\caption{Comparison of numerical and exact solutions for the Riemann problem of \textit{Structure 2}.}
	\label{fig3}
\end{figure}
\begin{figure}[!h]
	\centering
	\subfigure[density]{
		\includegraphics[width=0.31\linewidth]{ 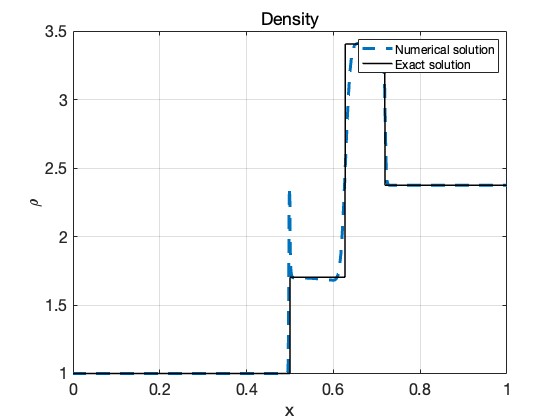}
	}
	\subfigure[velocity]{
		\includegraphics[width=0.31\linewidth]{ 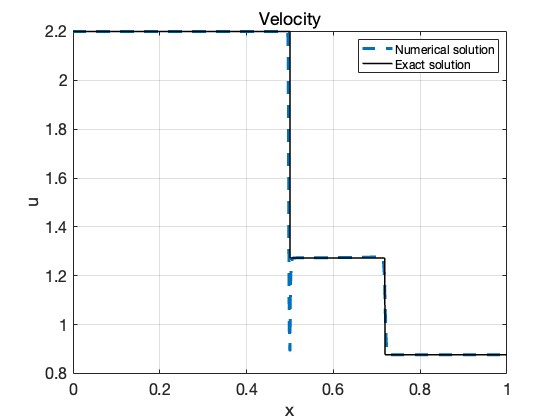}
	}
	\subfigure[pressure]{
		\includegraphics[width=0.31\linewidth]{ 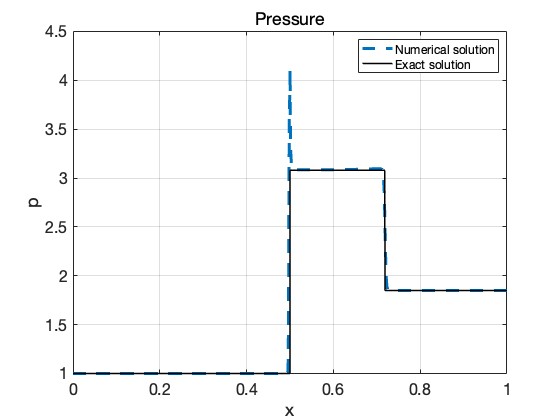}
	}
	\caption{Comparison of numerical and exact solutions for the Riemann problem of \textit{Structure 3}.}
	\label{fig4}
\end{figure}

\subsection{Numerical verification of the physically admissible criterion (Remark~\ref{R1})}

According to the physical admissibility criterion, the coupled Riemann solutions are expected to converge to the classical Riemann solution as $k\to 0$. To verify this property, we compute numerical solutions for a range of $k$ values. The initial condition is chosen to be the classical Sod shock-tube problem,
\begin{alignat*}{4} 
	&\rho_L=1.0,& &u_L=0.0,& &p_L=1.0,&\\ &\rho_R=0.125,& \quad &u_R=0.0,& \quad &p_R=0.1.&
\end{alignat*}
Figure~\ref{fig5} presents the numerical results obtained with five different parameter values $k=0,0.05,0.1,0.2,0.3$. The results clearly demonstrate that, as $k$ decreases to zero, the numerical solutions approach the classical Riemann solution corresponding to $k=0$, in full agreement with the physical admissibility criterion. 
\begin{figure}[!h]
	\centering
	\subfigure[density]{
		\includegraphics[width=0.31\linewidth]{ 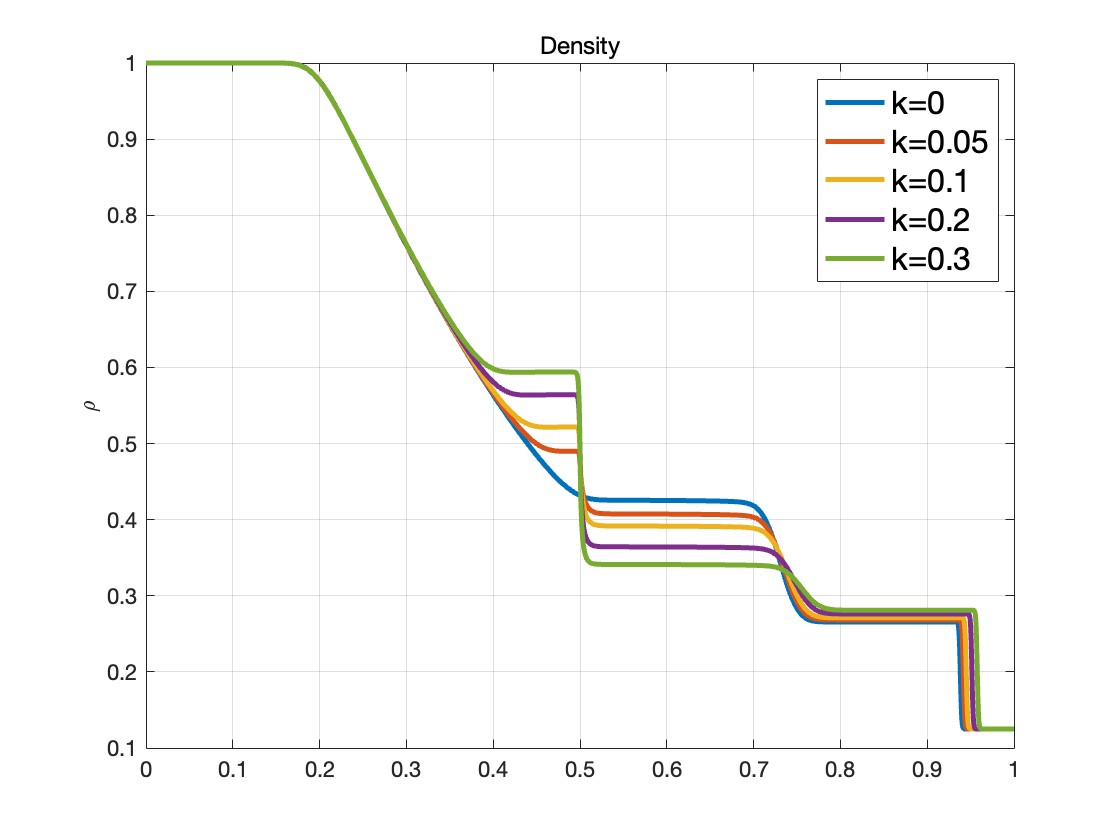}
	}
	\subfigure[velocity]{
		\includegraphics[width=0.31\linewidth]{ 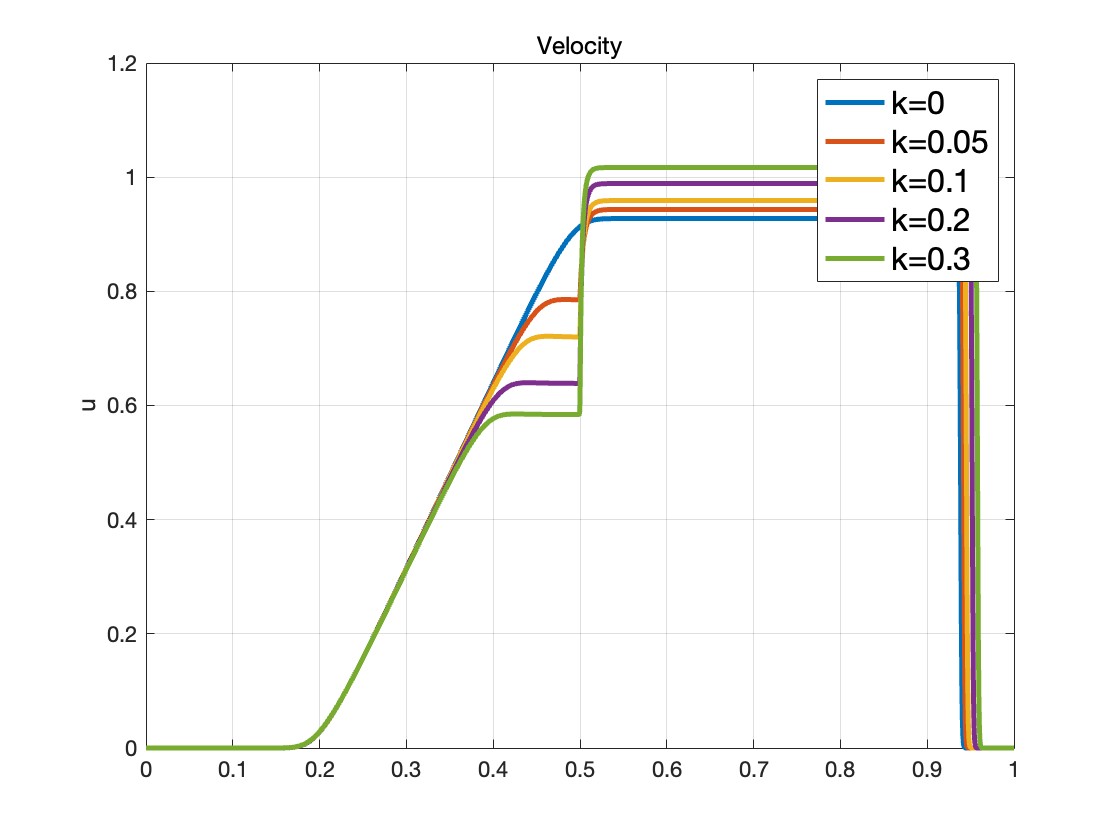}
	}
	\subfigure[pressure]{
		\includegraphics[width=0.31\linewidth]{ 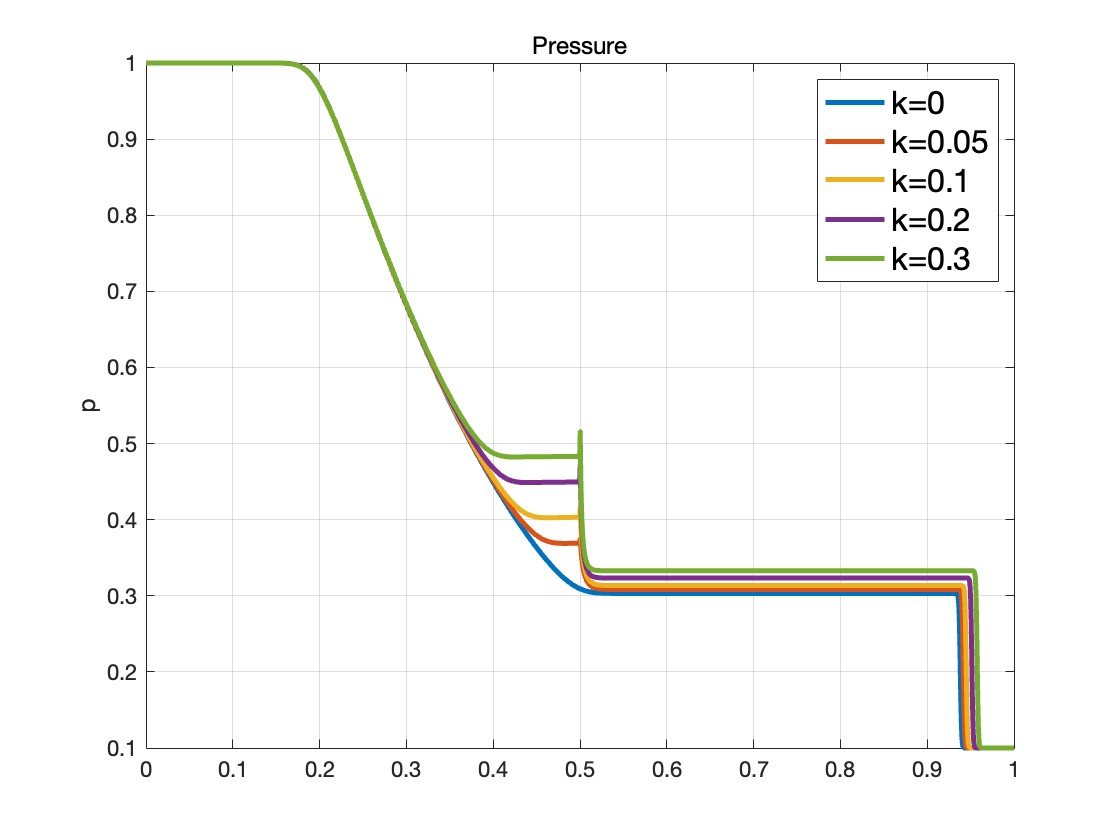}
	}
	\caption{The numerical results of the Riemann problem with different $k$.}
	\label{fig5}
\end{figure}

\subsection{A test of the Riemann problem in the absence of admissible solutions}

We construct the problem described in Theorem~\ref{E8} in such a way that no admissible Riemann solution exists. The initial condition is specified as
\begin{alignat*}{4}
	&\rho_L=1.0, & \quad &u_L=2.2, & \quad &p_L=1.0, &\\
	&\rho_R=2.8610, & \quad &u_R=1.0719, & \quad &p_R=2.0891. &
\end{alignat*}
Figure~\ref{fig6} shows the numerical results for the density, pressure, and Mach number. The computed solution remains self-similar and consists of three waves: a stationary discontinuity at the origin, a right-moving contact discontinuity, and a right-moving shock wave. The Mach number profile indicates that the left trace at the coupling interface is supersonic, whereas the right trace is subsonic. This implies that the numerical solution violates the monotonicity criterion.

\begin{figure}[!h]
	\centering
	\subfigure[density]{
		\includegraphics[width=0.31\linewidth]{ 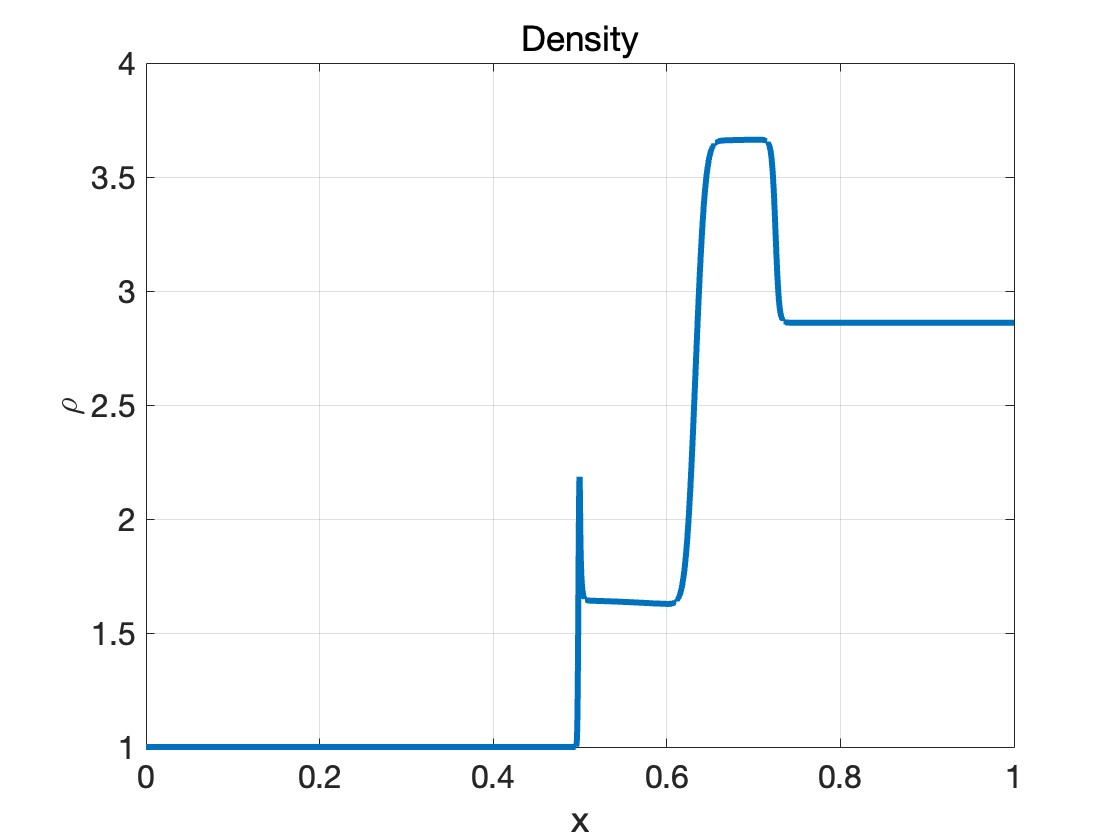}
	}
	\subfigure[pressure]{
		\includegraphics[width=0.31\linewidth]{ 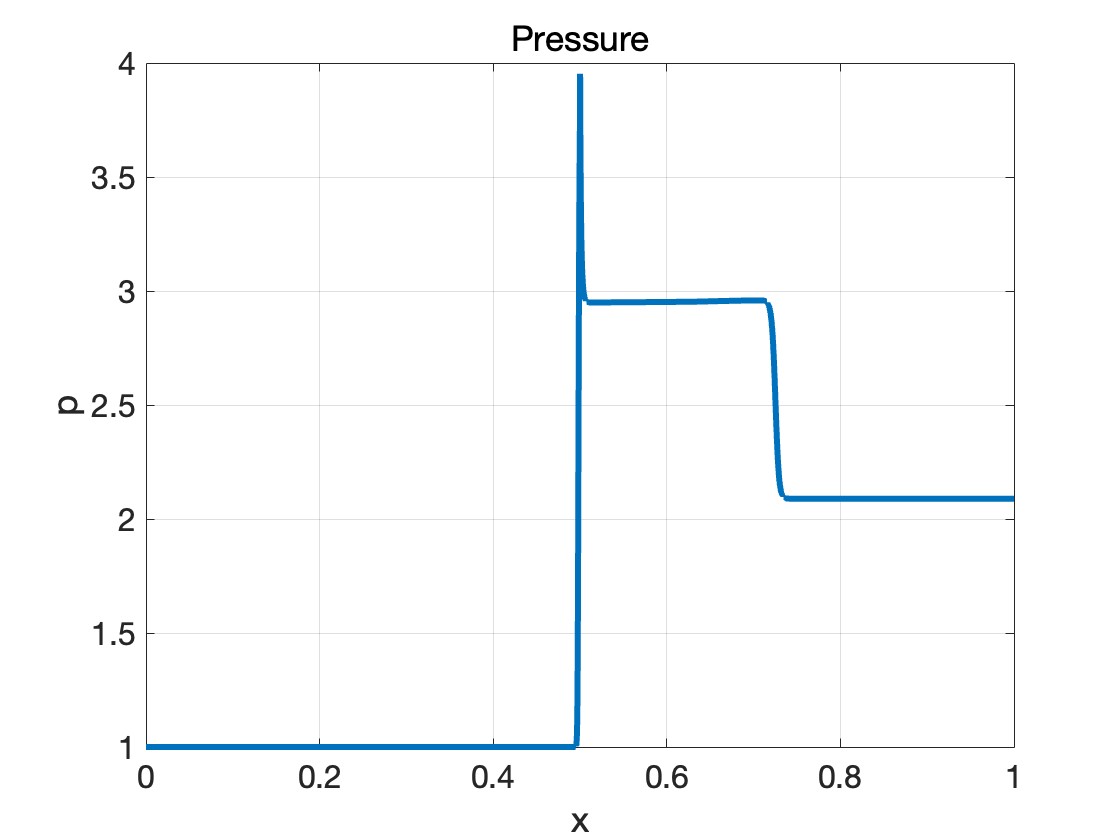}
	}
	\subfigure[Mach number]{
		\includegraphics[width=0.31\linewidth]{ 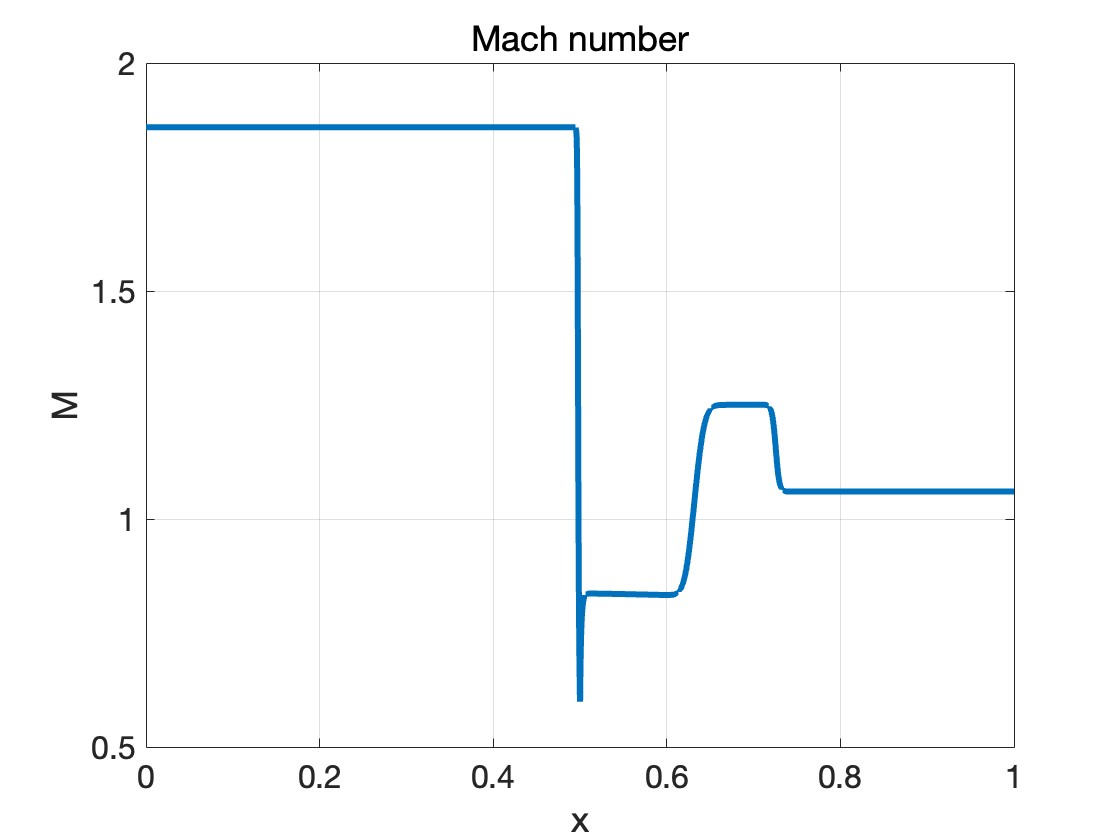}
	}
	\caption{The numerical results of the Riemann problem without an admissible solution.}
	\label{fig6}
\end{figure}

\section{Conclusion}\label{sec8}
In this work, we have studied the coupled Riemann problem with a heat flux discontinuity by the half-Riemann problem approach. Owing to the different regimes of the Mach number, several solution structures may occur. We have shown that weak solutions are not necessarily unique and introduced an admissibility criterion to select the physically relevant ones. The local existence of admissible solutions has been established. Furthermore, we have constructed a family of initial data for which no admissible solution exists. Finally, several theoretical findings have been verified by numerical experiments.


\section*{Conflicts of interest. }
On behalf of all authors, the corresponding author states that there is no conflict of interest.

\appendix

\section{Lax curves of the Euler system}\label{App2}

The classical Riemann solution of the Euler system can be constructed from simple waves associated with the three characteristic fields. In this section, we list the Lax curves of these waves. The 2-characteristic field is linearly degenerate, while the 1- and 3-characteristic fields are genuinely nonlinear. For simplicity, we only explicitly list the Lax curves for the 1-field and 2-field; the 3-field can be obtained analogously.  

\subsection{1-characteristic field}

The Lax curve \(L_1^{\pm}(\eps;\vU_0)\) is expressed in terms of the primitive variables: density \(\rho^{\pm}(\eps;\vU_0)\), velocity \(u^{\pm}(\eps;\vU_0)\), and pressure \(p^{\pm}(\eps;\vU_0)\). We take the parameter \(\eps\) as
\[
\eps = \frac{p^{\pm}(\eps;\vU_0)}{p_0}-1,
\]
so that 
\[
p^{\pm}(\eps;\vU_0) = p_0(\eps+1).
\]

\textbf{Shock wave:} If \(\eps \ge 0\), then the 1-wave is a shock, and
\begin{equation*}
	\begin{aligned}
		\rho^{\pm}(\eps;\vU_0)&=\rho_0\frac{(\gamma+1)(\eps+1)+(\gamma-1)}{(\gamma-1)(\eps+1)+(\gamma+1)},\\
		u^{\pm}(\eps;\vU_0)&=u_0+c_0(\rho_0-\rho^{\pm}(\eps;\vU_0))\big(\rho_0+\frac{\gamma-1}{2}(\rho_0-\rho^{\pm}(\eps;\vU_0))\big)
	\end{aligned}
\end{equation*}

\textbf{Rarefaction wave:} If \(\eps < 0\), then the 1-wave is a rarefaction, and
\[
\begin{aligned}
	\rho^{\pm}(\eps;\vU_0) &= \rho_0 (1+\eps)^{1/\gamma},\\
	u^{\pm}(\eps;\vU_0) &= u_0 + \frac{2c_0}{\gamma-1} \left[ 1 - (1+\eps)^{(\gamma-1)/(2\gamma)} \right].
\end{aligned}
\]

It is easy to check that \(L_1^{\pm}(\eps;\vU_0)\) is twice continuously differentiable with respect to \(\eps\), and
\[
\frac{d u^{\pm}(\eps;\vU_0)}{d\eps} < 0, \quad \frac{d^2 u^{\pm}(\eps;\vU_0)}{d\eps^2} > 0,
\]
see, e.g., \cite{godlewski2013numerical}.  

\subsection{2-characteristic field}

Across the contact discontinuity (2-field), the pressure and velocity remain constant. The Lax curve \(L_2^{\pm}(\eps;\vU_0)\) can be written as
\[
\rho^{\pm}(\eps;\vU_0) = \rho_0 (1+\eps), \quad u^{\pm}(\eps;\vU_0) \equiv u_0, \quad p^{\pm}(\eps;\vU_0) \equiv p_0.
\]

\section{Proof of Proposition~\ref{P4}}\label{App1}

We first consider $h_1(M,k)$. Recall that
\begin{align*}
	h_1(M,k)
	&=\frac{\gamma M^2+1}{\gamma (\psi(M))^2+1}
	=\frac{\gamma M^2+1}{\gamma \frac{1- I}{1+\gamma I}+1}
	=\frac{(\gamma M^2+1)(1+\gamma I)}{\gamma+1} \\
	&=\frac{1}{\gamma+1}\left( \gamma M^2+1+\gamma \left[ (\gamma M^2+1)^2-(\gamma+1)M^2\big((\gamma-1)M^2+2\big)(1+k)\right]^{1/2}\right).
\end{align*}
Denote $t=M^2<1$. Then
\begin{equation*}
	\frac{\gamma+1}{2M}\frac{\partial h_1}{\partial M}
	= \frac{-\gamma\big(k-t+\gamma k-kt+\gamma^2kt+1\big)}
	{\left[ (\gamma t+1)^2-(\gamma+1)t\big((\gamma-1)t+2\big)(1+k)\right]^{1/2}}+\gamma.
\end{equation*}
Since
\begin{align*}
	&\big(k-t+\gamma k-kt+\gamma^2kt+1\big)^2-\Big[(\gamma t+1)^2-(\gamma+1)t\big((\gamma-1)t+2\big)(1+k)\Big] \\
	&=\big[(1-t)+k(1-t)+\gamma k(\gamma t+1)\big]^2-(1-t)^2+k(\gamma t+1)^2-k(1-t)^2 \\
	&=k(k+1)(1-t)^2+\gamma^2 k^2 (\gamma t+1)^2+2(1-t)\gamma k (\gamma t+1)+2\gamma k^2(1-t)(\gamma t+1)+k(\gamma t+1)^2 \\
	&>0,
\end{align*}
we deduce
\begin{equation*}
	\frac{\partial h_1}{\partial M}(M,k)<0, \qquad \text{for any } M<1,\,k>0.
\end{equation*}

We then consider $h_2(M,k)$. By definition,
\begin{align*}
	h_2(M,k)
	&=\frac{(\psi(M,k))^2(\gamma M^2+1)}{M^2(\gamma (\psi(M,k))^2+1)}
	=\frac{(\gamma M^2+1)(1- I)}{(\gamma+1)M^2} \\
	&=\frac{\gamma M^2+1-\big[(\gamma M^2+1)^2-(\gamma+1)M^2\big((\gamma-1)M^2+2\big)(1+k)\big]^{1/2}}{(\gamma+1)M^2}.
\end{align*}
Its derivative is
\begin{equation*}
	\frac{\gamma+1}{2M}\frac{\partial h_2}{\partial M}=
	\frac{1}{(\gamma+1)t^2}\left\{ 
	\frac{1-kt-k\gamma t-t}{\left[(\gamma t+1)^2-t(\gamma+1)\big((\gamma-1)t+2\big)(1+k)\right]^{1/2}}-1
	\right\}.
\end{equation*}
When $k<\tfrac{1}{\gamma^2-1}$, we have
\begin{equation*}
	1-kt-k\gamma t-t>0.
\end{equation*}
Moreover,
\begin{align*}
	&(1-kt-k\gamma t-t)^2-\Big[(\gamma t+1)^2-t(\gamma+1)\big((\gamma-1)t+2\big)(1+k)\Big] \\
	&=k(1+k)(\gamma+1)^2t^2>0.
\end{align*}
Hence
\begin{equation*}
	\frac{\partial h_2}{\partial M}(M,k)>0, \qquad \text{for any } M<1,\,k>0.
\end{equation*}
\qed

\bibliography{reference}
\bibliographystyle{plain}
\end{document}